 \def\p{\partial} \def\nb{\nonumber}
\def\Vh0{\stackrel{\circ}{V}_h} \def\to{\rightarrow}
\def\Om{\Omega}   
\newcommand{\q}{\quad}
 \def\R{{\mathbb R}}
\def\l{\label}  \def\f{\frac}  
\def\qda{q^\delta_\beta}
\def\qd{q^\dag}
\def\A{\mathcal{A}}
\def\Jdb
\def\D{\end{document}}   
\def\m{\mbox}
\newcommand{\lc}
{\mathrel{\raise2pt\hbox{${\mathop<\limits_{\raise1pt\hbox
{\mbox{$\sim$}}}}$}}}
\newcommand{\gc}
{\mathrel{\raise2pt\hbox{${\mathop>\limits_{\raise1pt\hbox{\mbox{$\sim$}}}}$}}}
\newcommand{\ec}
{\mathrel{\raise2pt\hbox{${\mathop=\limits_{\raise1pt\hbox{\mbox{$\sim$}}}}$}}}
\def\bb{\begin{equation}}  \def\ee{\end{equation}}
\def\beqn{\begin{eqnarray}}  \def\eqn{\end{eqnarray}}
\def\beqnx{\begin{eqnarray*}} \def\eqnx{\end{eqnarray*}}
\def\bn{\begin{enumerate}} \def\en{\end{enumerate}}
\def\bd{\begin{description}} \def\ed{\end{description}}
\newtheorem{lemma}{Lemma}[section]
\newtheorem{theorem}{Theorem}[section]
\newtheorem{corollary}{Corollary}[section]
\newtheorem{remark}{Remark}[section]
 \def\x{{\bf x}}
\title{Convergence Rates of Tikhonov
Regularizations for \\Elliptic and Parabolic Inverse Radiativity  Problems }
\author{
Dehan Chen \footnote{School of Mathematics and Statistics $\&$ Hubei Key Laboratory of Mathematical
Sciences, Central  China Normal University, Wuhan, 430079, P.R.China. The
work of this author was financially supported by
National Natural Science Foundation of China
(Nos.  11701205 and 11871240 ). (dehan.chen@uni-due.de)}
\and
Daijun Jiang\footnote{School of Mathematics and Statistics $\&$ Hubei Key Laboratory of Mathematical
Sciences, Central  China Normal University, Wuhan, 430079, P.R.China. The
work of this author was financially supported by
National Natural Science Foundation of China
(Nos. 11871240, 11401241 and 11571265) and NSFC-RGC (China-Hong Kong, No. 11661161017).
(jiangdaijun@mail.ccnu.edu.cn)}
\and Jun
Zou\footnote{Department of Mathematics, The Chinese University of
Hong Kong, Shatin, Hong Kong. The work
of this author was substantially supported by Hong Kong RGC grants
(Project  14322516).  (zou@math.cuhk.edu.hk)}}
\begin{document}
\maketitle

\begin{abstract}
We shall study in this paper the  convergence rates of  Tikhonov
regularization for the recovery of the radiativities in elliptic and parabolic systems with
Dirichlet boundary conditions in general dimensional spaces. The conditional stability estimates are first
derived. Due to the difficulty of the verification of the existing source conditions or nonlinearity conditions for the
considered inverse radiativity  problems in high dimensional spaces, some new variational source
conditions are proposed. The conditions are rigorously verified in general dimensional spaces under the
conditional stability estimates. We shall finally derive the reasonable convergence rates, which
explicitly reveals the relation between the regularity of the radiativities and the convergence rates results.
\end{abstract}

\medskip
{\bf Key Words}. Inverse radiativity  problem,
Tikhonov regularization, Lipschitz type stability, convergence rates, variational source condition.

\medskip
\section{Introduction}\label{sec:intro}
\setcounter{equation}{0}
Identification of radiativities can find wide applications in industry, physics and engineering
\cite{bank89} \cite{engl96} \cite{engl95} \cite{isakov98}.
The stationary diffusivity and radiativity problem is often modelled by
the elliptic boundary value problem
\begin{equation}
\left\{ \begin{array}{rlllc}
-\nabla\cdot(a(\x)\nabla u) +q(\x)u&=&f(\x) &\m{in} &\Om, \\
 u&=&g(\x) &\m{on}  &\p\Om,
\end{array}
\right. \label{q1}
\end{equation}
while the time-dependent diffusion and radiation process can be modelled by
the parabolic system
\begin{equation}
\left\{ \begin{array}{rlclc}
\p_t u-\nabla\cdot(a(\x)\nabla u)+q(\x)u&=&f(\x,t) &\m{in} &\Om\times (0,T], \\
 u(\x,0)&=&u_0(\x) &\m{in}  &\Om,\\
 u(\x,t)&=&g(\x,t) &\m{on}  &\p\Om\times (0,T],
\end{array}
\right. \label{q1p}
\end{equation}
where $\Om\subset \R^d~(d=2,3)$ is the interested physical domain,
an open bounded and connected domain with $C^2$ boundary $\p\Om$.
The source density $f(\x)$ or $f(\x,t)$, ambient temperature $g(\x)$ or $g(\x,t)$,
conductivity $a(\x)$ and the initial temperature $u_0(\x)$ are given, while
the radiativity $q(\x)$ is the focus of our interest to be
reconstructed  in the following admissible constraint set
\beqn
K=\Big\{q\in L^2(\Om);\,\,0<\underline q\leq q\leq\bar q \,\,{\rm a.e.\,\, in}\,\, \Om\Big\}.
 \l{eq:contraint}
\eqn
Here $\underline q$ and $\bar q$ are two positive constants. For convenience, we often write the
solutions of systems (\ref{q1}) and (\ref{q1p}) as $u(q)$ to emphasize their dependence on the
radiativities $q(\x)$.

We shall consider the following elliptic and parabolic inverse radiativity problems:

{\bf Elliptic Inverse Radiativity Problem }. Let $a(\x),f(\x)$ and $g(\x)$ be know in  (\ref{q1}),
recover the radiativity  $q(\x)$
in  $\Omega$ from the available noisy data $\nabla z^\delta$ (or $z^\delta$) of $\nabla u$  (or $u$) in $\Om$, where
$\delta$ is the noise level.

{\bf Inverse Parabolic Radiativity Problem}. Let $a(\x),f(\x,t)$, $g(\x,t)$ and $u_0(\x)$ be know in  (\ref{q1p}),
identify the radiativity  $q(\x)$ in $\Omega$  from the available noisy data
$\nabla z^\delta$ (or $z^\delta$) of $\nabla u$  (or $u$) in $\Om\times I$, where
$I$ is an open subinterval of $(0,T]$.

%

Convergence rates have been well studied for Tikhonov regularizations
for inverse conductivity and radiativity problems \cite{engl89} \cite{engl2000} \cite{dinh10} \cite{bangti11} \cite{jiang12}.
Most convergence results are established under the well recognised
classical convergence theory for general inverse problems developed in \cite{engl89}.
This classical framework requires the forward map $u(q)$ to be
Fr$\acute{e}$chet differentiable and the Fr$\acute{e}$chet differentive $u'(q)$
Lipschitz continuous. The essence of the classical theory is its source condition
which involves the adjoint operator $u'(q)^*$ and requires the existence of a small source function
in certain sense.
A new convergence theory was proposed in \cite{engl2000} for an inverse
conductivity problem in a parabolic system to relax the restrictive
requirements in the classical convergence theory \cite{engl89}.
A much simpler source condition was presented in \cite{engl2000}, which
involved only the forward map $u(q)$ itself, instead of its derivative and the adjoint, and
does not require the smallness for the source function and the Fr$\acute{e}$chet differentiability
of $u(q)$ and the Lipschitz continuity of the Fr$\acute{e}$chet differentive $u'(q)$.
Same convergence rates as the ones from the classical theory were achieved under these
much weaker and more realistic conditions.
However, this new theory works only to the time-dependent inverse conductivity
problems and does not apply to elliptic inverse problems, and
more importantly, the proposed source conditions can be verified only in the one-dimensional spaces.
Convergence rates of the Tikhonov regularizations were further studied
in \cite{dinh10} for identifying conductivity and radiativity respectively in elliptic systems.
The identifying parameters were assumed to be known over all the boundaries,
then the source conditions in \cite{engl2000} can be relaxed and the convergence rates
can be established for elliptic systems.
But in most applications, the identifying parameters may not be accessible
over the entire boundary.
A novel convergence theory was developed in \cite{bangti11} for general nonlinear
inverse operator equation,
under a special source condition and a strong nonlinearity condition,
which can also get rid of the smallness for the source function.
Inverse conductivity problem was investigated \cite{jiang12} for
a coupled elliptic and parabolic system, and the convergence rate was established
for the $H^1$ regularization and  mixed $L^p$-$H^1$ regularization,
under a simple and easily interpretable source condition, again without smallness
for the source function.
As far as the stationary or instationary inverse conductivity and radiativity problems are concerned,
the aforementioned convergence theories need some source conditions \cite{engl89} or the required nonlinearity
conditions \cite{bangti11}, which are difficult
to be verified in general dimensional spaces, unless adding some restrictive conditions on the
identified parameters or forward solutions.


Variational source condition (VSC) and the resulting convergence rates results were initiated by Hofmann et al. (\cite{HKPS07})
and its extensions were  proven independently in  \cite{BH10}, \cite{Fle10} and \cite{Grasm10}.   In compared to the classical source condition,
VSC does not involve the computation of  Fr\'{e}chet differentiability of the forward operator, and its resulting convergence rates
for the regularized solutions follow immediately from VSC under an appropriate parameter choice rule (see e.g. \cite{HofMat12}).
In this work, we shall first derive some Lipschitz type stability estimates for the proposed inverse problems and then
propose some new variational source conditions to achieve reasonable convergence rates of the Tikhonov regularizations for the inverse problems.
There are three important novelties in this work. The first one
 is its rigorous verification of the proposed VSC in general dimensional spaces under the
Lipschitz type stabilities. The second one is that the identifying radiativities should not be assumed to be known
over the boundaries, which improves the results established in \cite{dinh10}. Finally, it
reveals the relation between the regularity of the radiativities and the convergence rates results.

The remainder of this work is arranged as follows. In Section\,\ref{sec:pre}, some preliminaries
are presented. In sections\,\ref{sec:elliptic},
the conditional stability estimates are derived and some new VSCs are proposed for the elliptic inverse radiativity problem.
We shall verify the VSCs rigorously and dirive the reasonable convergence rates results.
In Section\,\ref{sec:parabolic}, we shall get some conditional stability estimates for the parabolic inverse radiativity problem
and propose some new VSCs to achieve the convergence rates results.
  Some concluding remarks are given in Section\,\ref{sec:conclu}.


\section{Preliminaries}\label{sec:pre}
\setcounter{equation}{0}
In this section, we shall present some preliminaries for our later use.

We first recall some terminologies and notations.  Given  a linear  operator $T:X \to X$ on a complex Hilbert space $X$,
the notations $D(T)$, $\rho(T)$ and $\sigma(T)$ stand for the domain, resolvent and spectrum of $T$ respectively. A linear  operator
$T:D(T)\subset X\to X$ is called closed, if its graph $\{(x,Tx), ~ x \in D(T)\}$ is closed in  $X\times X$.
Furthermore,
the adjoint  of  a densely defined operator $T:D(T)\subset X\to X$  is denoted by $T^*:D(T^*)\subset X\to X$.
We call  $T:D(T)\subset X\to X$ symmetric,  if $Tx=T^*x$ holds true for all $x\in D(T)$, i.e.,
$(Tx,y)_X =( x,Ty)_X $ for all $x,y\in D(T)$. If a symmetric operator $T$ satisfies that $D(T)=D(T^*)$, then
$T$ is said to be self-adjoint.
For two Banach spaces $X$ and $Y$  that are continuously embedded in the same Hausdorff topological vector space, we denote by $[X,Y]_\theta$ ($0\leq\theta\leq 1$)   the complex interpolation space between $X$ and $Y$.

Then for any $s\in(-\infty,\infty)$, we define the following fractional Sobolev space
$$
H^s(\R^d):=\{u\in \mathcal{S}(\R^d)'\mid \|u\|^2_{H^s(\R^d)}:=\int_{\R^d}(1+|\xi|^2)^{s}|(\mathcal{F}u)(\xi)|^2 d\xi<+\infty  \},
$$
where $\mathcal{F}:\mathcal{S}(\R^d)'\to \mathcal{S}(\R^d)'$ is the Fourier transform and $\mathcal{S}(\R^d)'$
denotes the tempted distribution space  (see, e.g.,  \cite{Lions,Wloka,Yagibook}). For a bounded domain $ U \subset \R^d$ with a Lipschitz boundary $\p U$,  the space  $H^s(U)$ with a possibly non-integer exponent $s\geq 0$ is defined as the space of all complex-valued functions $v\in L^2(U)$ satisfying
$V_{\vert U}=v$ for some $V\in H^s(\R^n)$, endowed with the norm
$$
\|v\|_{s,U}:=\inf_{\substack{V_{\vert U}=v \\ V\in H^s(\R^n)}}\|V\|_{H^s(\R^n)}.
$$
{ When no confusion may be caused,
we   simply drop $U$ in the subscription of  $\|\cdot\|_{s,U}$.}
For every $s \in [0,\infty)$, we denote by $\lfloor s \rfloor \in [0,s]$ the largest integer less or equal to $s$. In the case of $s \in (0,\infty)$ with $s= \lfloor s \rfloor + \sigma$ and $0<\sigma<1$, the norm $\|\cdot\|_{s,U}$ is equivalent  to  (cf. \cite{Yagibook})
$$
 \left(\sum_{|\alpha|\leq \lfloor s \rfloor } \|D^\alpha u\|_{ L^2(U) }^2+\sum_{|\alpha|\leq \lfloor s \rfloor } \, \, \iint\limits_{U\times U}\f{|D^\alpha u(x)-D^\alpha u(y)|^2}{|x-y|^{n+2\sigma}} dxdy\right)^{\f{1}{2}}.
$$
 If $s$ is a non-negative integer, then  $H^s(U)$  coincides with the classical Sobolev space.
We set
 $H_0^s(U)$ to the completion of $C_c^\infty(U)$ under the norm $\|\cdot\|_{s,U}$, and $H^{-s}(U)$  to the dual space of
 $H_0^s(U)$  with respect to inner product of $L^2(U)$. It is also well-known that
 the inner product $( \cdot, \cdot)_{U}=\int_{U} f\overline{g} dx $  extends to an bounded
 sesquilinear form on  $H^{-s}(U)\times H^s_0(U)$,  where $\overline{g}$ denotes the complex conjugate of $g$,
 which satisfies $|\langle f,g\rangle_{H^{-s}(U),H^s_0(U)} |\leq
 \|f\|_{H^{-s}(U) }\|g\|_{H^s_0(U)} $ for all $f\in H^{-s}(U)$ and $g\in H^s_0(U)$.

Throughout the paper,  $C$ is often used for a generic positive constant.  We shall often use the symbol $\langle\cdot,\cdot\rangle$
for general duality pairing, and denote by $\rightarrow$
and $\rightharpoonup$ the strong convergence and weak convergence respectively.

%

\begin{lemma}[\cite{Tambaca,Yagibook}]\label{lemma:product}
\begin{enumerate}
\item Let $r,s\in \R$ such that $r,s<\f{d}{2}$ and $r+s>0$. Then
for $t=r+s-\f{d}{2}$ and distributions $u\in H^r(\R^d)$,
$v\in H^s(\R^d)$ one has $uv\in H^t(\R^d)$ and the following inequality holds
$$
\|uv\|_{t,\R^d}\leq C\|u\|_{s,\R^d}\|v\|_{r,\R^d}.
$$

\item
Let  $r>\f{d}{2}$. Then
$H^s(\Omega)$ is an algebra under pointwise multiplication, i.e.,  for all functions $u,v\in H^r(\Omega)$, it holds
$$
\|uv\|_{H^r(\Omega)}\leq C \|u\|_{H^r(\Omega)}\|v\|_{H^r(\Omega)}.
$$

\end{enumerate}

\end{lemma}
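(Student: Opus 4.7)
The plan is to reduce both statements to Fourier-side bilinear bounds and exploit the sharp range of exponents dictated by the hypotheses.

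For part (1), I pass to the Fourier side using $\widehat{uv}=(2\pi)^{-d/2}\widehat u\ast\widehat v$. Setting $f(\xi):=(1+|\xi|^2)^{r/2}\widehat u(\xi)$ and $g(\xi):=(1+|\xi|^2)^{s/2}\widehat v(\xi)$, which lie in $L^2(\R^d)$ by hypothesis, the desired inequality becomes the bilinear bound
$$
\Big\|\int_{\R^d} K(\xi,\eta)\,f(\xi-\eta)\,g(\eta)\,d\eta\Big\|_{L^2_\xi}\leq C\,\|f\|_{L^2}\|g\|_{L^2},
$$
where $K(\xi,\eta):=(1+|\xi|^2)^{t/2}(1+|\xi-\eta|^2)^{-r/2}(1+|\eta|^2)^{-s/2}$. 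The central tool is Peetre's inequality $(1+|x+y|^2)^{\alpha}\leq 2^{|\alpha|}(1+|x|^2)^{|\alpha|}(1+|y|^2)^{\alpha}$ applied to $\alpha=t$ with the decomposition $\xi=(\xi-\eta)+\eta$ (or its symmetric counterpart, depending on the sign of $t$). A short case analysis reduces $K$ to a product $(1+|\xi-\eta|^2)^{-a}(1+|\eta|^2)^{-b}$ of strictly negative powers with $a,b>0$ and $a+b>d/2$ precisely when $r,s<d/2$ and $r+s>0$.

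Once $K$ is dominated in this way, a Schur-type argument combined with the classical identity
$$
\int_{\R^d}(1+|\xi-\eta|^2)^{-a}(1+|\eta|^2)^{-b}\,d\eta\leq C\,(1+|\xi|^2)^{d/2-a-b}\qquad\text{for}\ a,b<d/2,\ a+b>d/2,
$$
delivers the $L^2\times L^2\to L^2$ boundedness and hence the claimed inequality. The main obstacle is handling the several subcases generated by the signs of $t$, $|t|-r$ and $|t|-s$: Peetre's inequality chooses which variable absorbs the $(1+|\xi|^2)^{t/2}$ factor, and in each subcase one must verify that the Schur integrals converge. The hypothesis $r+s>0$ forces $t>-d/2$ and $r,s<d/2$ forces $t<d/2$, so $t$ lies exactly in the range where the above Schur integral is finite.

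For part (2) the essential ingredient is the Sobolev embedding $H^r(\Omega)\hookrightarrow L^\infty(\Omega)$ valid for $r>d/2$. I would fix an extension operator $E:H^r(\Omega)\to H^r(\R^d)$ and work on $\R^d$. For integer derivatives $D^\beta(uv)$ with $|\beta|\leq\lfloor r\rfloor$, the Leibniz rule and H\"older's inequality control every term by $\|u\|_{H^r}\|v\|_{H^r}$ after placing the lower-order factor into $L^\infty$ via the embedding. For the Slobodeckij seminorm of $D^\beta(uv)$ with $|\beta|=\lfloor r\rfloor$ and fractional part $\sigma=r-\lfloor r\rfloor$, expand the increment by the product rule and again bound the non-differenced factor in $L^\infty$, leaving a seminorm contribution already controlled by $\|u\|_{H^r}\|v\|_{H^r}$. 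Summing all contributions yields the algebra property. The most delicate piece here is the bookkeeping for the fractional seminorm when $\sigma>0$, but the Sobolev embedding legitimises every $L^\infty$-placement and closes the estimate.
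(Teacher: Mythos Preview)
The paper does not supply its own proof of this lemma: it is stated with a citation to \cite{Tambaca,Yagibook} and used as a black box. So there is no in-paper argument to compare against.

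That said, your sketch is essentially the standard route. For part~(1), the Fourier-side reduction to a bilinear kernel estimate and the use of Peetre's inequality to strip off the $(1+|\xi|^2)^{t/2}$ weight is exactly how the Tamba\v{c}a reference proceeds; the convolution-type bound you quote for $\int(1+|\xi-\eta|^2)^{-a}(1+|\eta|^2)^{-b}\,d\eta$ is the workhorse there. One comment: rather than a pure Schur test, the cleanest way to close the $L^2\times L^2\to L^2$ bound after dominating $K$ is usually Young's inequality for convolutions (or, in the borderline cases, Hardy--Littlewood--Sobolev), applied after writing the dominated bilinear form as a convolution of weighted $L^2$ functions placed into suitable $L^p$ spaces via H\"older. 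Your case analysis on the signs of $t$, $|t|-r$, $|t|-s$ is the right bookkeeping, though you should check that in every subcase the resulting exponents $a,b$ individually satisfy the integrability constraints (not just $a+b>d/2$).

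For part~(2), your extension-plus-Leibniz-plus-Slobodeckij argument works, but it is heavier than necessary. Since you already have the Fourier machinery from part~(1), the quickest proof on $\R^d$ uses $\langle\xi\rangle^r\le C(\langle\xi-\eta\rangle^r+\langle\eta\rangle^r)$ for $r\ge 0$, splits the convolution accordingly, and invokes $\widehat u\in L^1$ (equivalent to $H^r\hookrightarrow L^\infty$ when $r>d/2$) to apply Young directly; the $\Omega$-version then follows by extension. Either route is fine.
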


We end this section by recalling the following two well-posedness results, which can be found,
e.g., \cite{gri85}  (Corollary 2.2.2.4) and   \cite{Ladyzenskaja} (Chapter VI, section 9)  for the elliptic system (\ref{q1})
and  parabolic system \eqref{q1p} respectively.
\begin{lemma}\label{lem:well}
Assume that $a(\x)\in W^{1,\infty}(\Om)$ with a positive lower bound,
$q(\x)\in K$,  $f(\x)\in L^2(\Om)$ and $g(\x)\in H^{\frac{3}{2}}(\p\Om)$.
Then there exists a unique solution $u\in H^{2}(\Om)$ to the system (\ref{q1})
with the estimate
\beqn
&&\|u\|_{2,\Om}
\leq C(\|f\|_{0,\Om}+\|g\|_{\frac{3}{2},\p\Om}).
\label{eadd2}
\eqn
\end{lemma}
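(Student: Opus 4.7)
The plan is to reduce the inhomogeneous Dirichlet problem to a homogeneous one, solve it weakly via Lax--Milgram, and then upgrade from $H^1$ to $H^2$ regularity using standard elliptic regularity for $C^2$ domains with $W^{1,\infty}$ coefficients, as in Grisvard. Throughout I will track the constants to ensure they depend only on $\Omega$, $\underline q$, $\bar q$, and $\|a\|_{W^{1,\infty}(\Omega)}$, and not on the specific $q\in K$.

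First I would introduce a lifting of the boundary data. Since $\partial\Omega$ is $C^2$ and $g\in H^{3/2}(\partial\Omega)$, the standard trace theorem yields $G\in H^{2}(\Omega)$ with $G|_{\partial\Omega}=g$ and $\|G\|_{2,\Omega}\le C\|g\|_{3/2,\partial\Omega}$. Setting $v=u-G$, the system (\ref{q1}) becomes
\begin{equation*}
\begin{cases}
-\nabla\cdot(a\nabla v)+qv=\tilde f \quad \text{in }\Omega,\\
v=0 \quad\text{on }\partial\Omega,
\end{cases}
\end{equation*}
where $\tilde f:=f+\nabla\cdot(a\nabla G)-qG\in L^2(\Omega)$ with $\|\tilde f\|_{0,\Omega}\le C(\|f\|_{0,\Omega}+\|g\|_{3/2,\partial\Omega})$ using $a\in W^{1,\infty}(\Omega)$ and $q\in K$.

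Next I would establish existence and uniqueness in $H^1_0(\Omega)$. The bilinear form
$$
B(v,w):=\int_\Omega a\nabla v\cdot\nabla w\,dx+\int_\Omega qvw\,dx
$$
is bounded on $H^1_0(\Omega)\times H^1_0(\Omega)$, and coercive because $a\ge\underline a>0$ and $q\ge\underline q>0$ a.e. (so in fact one has coercivity even without Poincar\'e). The Lax--Milgram theorem then yields a unique weak solution $v\in H^1_0(\Omega)$ with
$$
\|v\|_{1,\Omega}\le C\|\tilde f\|_{0,\Omega}.
$$

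The last step, which I expect to be the main technical obstacle, is the $H^2$ regularity. Expanding the principal part under $a\in W^{1,\infty}(\Omega)$ the PDE reads
$$
-a\Delta v=\tilde f-qv+\nabla a\cdot\nabla v\quad\text{in }\Omega,\q v|_{\partial\Omega}=0,
$$
and the right--hand side belongs to $L^2(\Omega)$ with $L^2$--norm bounded by $C(\|\tilde f\|_{0,\Omega}+\|v\|_{1,\Omega})$. Since $\partial\Omega$ is $C^2$ and $a^{-1}\in W^{1,\infty}(\Omega)$, Corollary 2.2.2.4 of \cite{gri85} (or equivalently the difference--quotient argument applied to the divergence form operator $-\nabla\cdot(a\nabla\cdot)+q$ on a $C^{1,1}$ domain with $W^{1,\infty}$ coefficient) gives $v\in H^2(\Omega)$ together with
$$
\|v\|_{2,\Omega}\le C\bigl(\|\tilde f\|_{0,\Omega}+\|v\|_{1,\Omega}\bigr)\le C\bigl(\|f\|_{0,\Omega}+\|g\|_{3/2,\partial\Omega}\bigr).
$$
The main care here is that the regularity constant must be uniform in $q\in K$; this follows because $q$ enters only as a bounded zeroth--order perturbation and the constants in the a priori estimate depend on $q$ only through $\|q\|_{L^\infty(\Omega)}\le \bar q$ and its positive lower bound $\underline q$. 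Returning to $u=v+G$ and combining estimates gives the desired bound (\ref{eadd2}); uniqueness of $u$ is inherited from uniqueness of $v$ via Lax--Milgram.
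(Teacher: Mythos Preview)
Your proposal is correct and in fact goes well beyond what the paper itself provides: the paper does not prove this lemma at all but simply cites Grisvard \cite{gri85}, Corollary~2.2.2.4, as the source of the result. Your argument (lifting the boundary data, Lax--Milgram in $H^1_0$, then $H^2$ regularity via the same corollary of Grisvard) is exactly the standard route behind that citation, and your care with uniformity of the constants in $q\in K$ is appropriate for the later use of the lemma.
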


%
\begin{lemma}\label{lem:wellt}
Assume that  $a(\x)\in W^{1,\infty}(\Om)$  with a positive lower bound,
 $q(\x)\in K$, $f(\x,t)\in L^2(0,T;L^2(\Om))$, $g(\x,t)\in L^2(0,T;H^{\frac{3}{2}}(\p\Om))\cap H^{\f 3 4}(0,T;L^2(\p\Omega))$
 and $u_0(\x)\in H^1(\Om)$.
Then there exists a unique solution $u\in L^2(0,T;H^{2}(\Om))\cap H^1(0,T;L^{2}(\Om))$ to
the system (\ref{q1p}) with the estimate:

\beqn
&&\|u\|_{L^2(0,T;H^{2}(\Om))}+\|u\|_{H^1(0,T;L^{2}(\Om))}\nb\\
&\leq& C(\|f\|_{L^2(0,T;L^{2}(\Om))}+\|g\|_{L^2(0,T;H^{\frac{3}{2}}(\p\Om))}
+\|g\|_{ H^{\f 3 4}(0,T;L^2(\p\Omega))}
+\|u_0\|_{1,\Om}).
\label{padd2}
\eqn
\end{lemma}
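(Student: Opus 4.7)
The plan is to prove the theorem by a standard lift-and-reduce argument combined with parabolic energy estimates and elliptic $H^2$-regularity. Since the claim concerns well-posedness with inhomogeneous Dirichlet data in the maximal regularity space $L^2(0,T;H^2(\Om))\cap H^1(0,T;L^2(\Om))$, the decisive structural fact is the anisotropic trace theorem (Lions--Magenes) identifying $L^2(0,T;H^{3/2}(\p\Om))\cap H^{3/4}(0,T;L^2(\p\Om))$ as exactly the trace space of $L^2(0,T;H^2(\Om))\cap H^1(0,T;L^2(\Om))$ on $\p\Om\times(0,T]$.

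First I would construct a lifting $G\in L^2(0,T;H^2(\Om))\cap H^1(0,T;L^2(\Om))$ of $g$ satisfying $G\big|_{\p\Om\times(0,T]}=g$ and
$\|G\|_{L^2(0,T;H^2(\Om))}+\|G\|_{H^1(0,T;L^2(\Om))}\leq C\big(\|g\|_{L^2(0,T;H^{3/2}(\p\Om))}+\|g\|_{H^{3/4}(0,T;L^2(\p\Om))}\big)$.
Setting $v=u-G$ reduces the problem to solving a zero-boundary parabolic system with modified source
$\tilde f=f-\p_t G+\nabla\cdot(a\nabla G)-qG\in L^2(0,T;L^2(\Om))$ and modified initial datum $v_0=u_0-G(\cdot,0)$. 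Here the inclusion $v_0\in H^1(\Om)$ uses the continuous embedding $L^2(0,T;H^2(\Om))\cap H^1(0,T;L^2(\Om))\hookrightarrow C([0,T];H^1(\Om))$ to give $G(\cdot,0)\in H^1(\Om)$.

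For the reduced homogeneous-Dirichlet problem I would use a Faedo--Galerkin scheme in the eigenbasis of the self-adjoint operator $-\nabla\cdot(a\nabla \cdot)+q$ on $H^1_0(\Om)$. Testing the Galerkin ODEs with $\p_t v_n$ and using $a,q\in L^\infty(\Om)$ with positive lower bounds produces the uniform bound $\|\p_t v_n\|_{L^2(0,T;L^2(\Om))}+\|v_n\|_{L^\infty(0,T;H^1(\Om))}\leq C(\|\tilde f\|_{L^2(0,T;L^2(\Om))}+\|v_0\|_{1,\Om})$. After extracting a weak limit $v$, I would read the equation pointwise in $t$ as $-\nabla\cdot(a\nabla v(t))+q v(t)=\tilde f(t)-\p_t v(t)\in L^2(\Om)$ and apply Lemma \ref{lem:well} (elliptic $H^2$-regularity for the stationary problem) for a.e.\ $t\in(0,T]$ to promote the $H^1$ control to the $L^2(0,T;H^2(\Om))$ bound. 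Adding the lifting estimate for $G$ yields the stated inequality for $u=v+G$; uniqueness follows from a standard Gronwall argument applied to the energy identity for the difference of two solutions, using $q\geq\underline q>0$.

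The main obstacle will be choosing the lifting $G$ with precisely matched spatial and temporal regularity so that both $\p_t G$ and $\nabla\cdot(a\nabla G)$ land in $L^2(0,T;L^2(\Om))$ simultaneously; this is exactly where the coupled boundary norm $L^2(0,T;H^{3/2}(\p\Om))\cap H^{3/4}(0,T;L^2(\p\Om))$ enters, and where one invokes the anisotropic trace theory and the interpolation machinery recalled in Section~\ref{sec:pre}. Once the correct lifting is secured, the remaining Galerkin/energy/elliptic-regularity steps are routine for the $C^2$ domain $\Om$ under the assumed coefficient regularity.
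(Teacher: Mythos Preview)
The paper does not prove this lemma; it is quoted as a known well-posedness result and simply attributed to Lady\v{z}enskaja--Solonnikov--Ural'ceva \cite{Ladyzenskaja} (Chapter~VI, \S9). Your proposal therefore goes well beyond what the paper actually does: you supply a self-contained argument via a lifting of the boundary datum (using the Lions--Magenes anisotropic trace theorem that identifies $L^2(0,T;H^{3/2}(\p\Om))\cap H^{3/4}(0,T;L^2(\p\Om))$ as the correct trace space), reduction to the homogeneous Dirichlet problem, a Faedo--Galerkin scheme tested against $\p_t v_n$, and finally time-pointwise use of the elliptic $H^2$-estimate of Lemma~\ref{lem:well}. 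This is the standard route to maximal $L^2$-regularity for such problems and is essentially what lies behind the reference the paper cites.

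One small point you should make explicit: after the lifting you obtain $v_0=u_0-G(\cdot,0)\in H^1(\Om)$, but your Galerkin energy estimate (testing with $\p_t v_n$ in the eigenbasis on $H^1_0(\Om)$) requires a uniform bound on $B(v_0^n,v_0^n)=\sum_{i\leq n}\lambda_i|(v_0,e_i)|^2$, which forces $v_0\in H^1_0(\Om)$, i.e.\ the compatibility condition $u_0|_{\p\Om}=g(\cdot,0)$. The lemma as stated in the paper omits this hypothesis (the cited reference \cite{Ladyzenskaja} does impose it), so this is a gap in the statement rather than in your method; you should note the missing assumption when invoking the result.
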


\section{Convergence rates of Tikhonov regularization for
elliptic inverse radiativity problem }\label{sec:elliptic}
\setcounter{equation}{0}

We will study in this section the Lipschitz type  stability and convergence rates of the Tikhonov
regularization for the recovery of the radiativity in the elliptic system  (\ref{q1}).  Throughout this section, we always assume that
\begin{enumerate}
\item[]   $a(\x)\in W^{1,\infty}(\Om)$  has a positive lower bound, $f(\x)\in L^2(\Om)$ and $g(\x)\in H^{\frac{3}{2}}(\p\Om)$
in \eqref{q1}.
\end{enumerate}

\subsection{Measurement data in gradient form}

Now suppose that the measurement data $\nabla z^\delta$ of $\nabla u(q)$ is noisy in $\Om$, with a noise level $\delta$, namely
\bb \label{eq:data}
\|\nabla u(\qd)- \nabla z^\delta\|_{0,\Om} \leq\delta\,,
\ee
where  $\qd$ is the true physical radiativity. The elliptic inverse radiativity problem  is highly ill-posed \cite{engl96},
and is usually transformed into an effective and stable minimisation system  with Tikhonov regularization:
\bb
\min_{q\in K}J_{\delta,\beta}(q)=\min_{q\in K}\Big(\f{1}{2}\|\nabla u(q)-\nabla z^\delta\|_{0,\Om}^2
+ \f \beta  2 \|q- q^*\|^2_{0,\Om}\Big),
\label{dai3}
\ee
where $\beta>0$ is the regularization parameter and $q^*\in K$ is an a priori estimate of the true
parameter $\qd$. 

We refer to \cite{dinh10} (Theorem 3.1) and establish the following theorem for the existence
of minimizers to optimization problem \eqref{dai3}.
\begin{theorem}\label{lemma:exis}
 There exists at least a minimizer  $\qda$ to optimization problem \eqref{dai3}.
\end{theorem}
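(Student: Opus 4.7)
The plan is the direct method of the calculus of variations. Since $J_{\delta,\beta}\ge 0$ on $K$, the infimum $m:=\inf_{q\in K} J_{\delta,\beta}(q)$ is finite, so I pick a minimizing sequence $\{q_n\}\subset K$. Each $q_n$ satisfies $\underline q\le q_n\le\bar q$ a.e.\ in $\Om$, so $\{q_n\}$ is bounded both in $L^\infty(\Om)$ and in $L^2(\Om)$. After passing to a subsequence (not relabelled), I obtain $q_n\rightharpoonup \tilde q$ weakly in $L^2(\Om)$ and weak-$*$ in $L^\infty(\Om)$. Because $K$ is convex and closed in $L^2(\Om)$, it is weakly closed by Mazur's lemma, hence $\tilde q\in K$.

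The next step is to identify the limit of the corresponding states $u_n:=u(q_n)$. Lemma~\ref{lem:well} gives a uniform bound $\|u_n\|_{2,\Om}\le C(\|f\|_{0,\Om}+\|g\|_{3/2,\p\Om})$; extracting a further subsequence yields $u_n\rightharpoonup u^\sharp$ weakly in $H^2(\Om)$ and, by Rellich's compact embedding $H^2(\Om)\hookrightarrow H^1(\Om)$, strongly in $H^1(\Om)$. I then pass to the limit in the weak formulation of \eqref{q1}. The diffusion and source terms are routine thanks to the weak $H^2$ convergence of $u_n$. For the reaction term I split
\[
q_n u_n - \tilde q u^\sharp \;=\; q_n(u_n-u^\sharp) + (q_n-\tilde q)u^\sharp.
\]
The first summand tends to $0$ strongly in $L^2(\Om)$ since $\|q_n\|_{L^\infty}\le\bar q$ and $u_n\to u^\sharp$ in $L^2(\Om)$; the second summand tends to $0$ weakly in $L^2(\Om)$ because $u^\sharp\in L^\infty(\Om)$ (by the Sobolev embedding $H^2\hookrightarrow L^\infty$ in dimensions $d=2,3$) and $q_n-\tilde q\rightharpoonup 0$ in $L^2(\Om)$. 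The uniqueness part of Lemma~\ref{lem:well} then forces $u^\sharp=u(\tilde q)$.

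To conclude, the strong $H^1$ convergence $u_n\to u(\tilde q)$ makes the data-misfit term of $J_{\delta,\beta}$ actually continuous along the chosen subsequence, while the penalty $\|\cdot-q^*\|_{0,\Om}^2$ is weakly lower semicontinuous on $L^2(\Om)$. Therefore
\[
J_{\delta,\beta}(\tilde q)\;\le\;\liminf_{n\to\infty} J_{\delta,\beta}(q_n)\;=\;m,
\]
and since $\tilde q\in K$, equality must hold, so $\tilde q$ is a minimizer.

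The only delicate point in the argument is the passage to the limit in the nonlinear product $q_n u(q_n)$: weak $L^2$ convergence of both factors would not suffice. It is resolved by upgrading the convergence of $u(q_n)$ from weak-$H^2$ to strong-$H^1$ via the Rellich compactness afforded by the uniform $H^2$ bound of Lemma~\ref{lem:well}, coupled with the uniform $L^\infty$ bound on $q_n$ built into the admissible set $K$.
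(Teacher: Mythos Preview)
Your argument is correct and is precisely the standard direct-method proof one expects here. Note that the paper does not actually supply its own proof of this theorem: it simply refers the reader to \cite{dinh10}, Theorem~3.1, whose proof proceeds along the same lines as yours (minimizing sequence in $K$, weak $L^2$-compactness of $K$, uniform $H^2$-bound on the states, Rellich compactness to upgrade to strong $H^1$-convergence, passage to the limit in the weak formulation, and weak lower semicontinuity of the penalty). So your proof fills in what the paper leaves to the literature, by essentially the same route.

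One small point worth making explicit: you use Lemma~\ref{lem:well} to obtain a bound on $\|u(q_n)\|_{2,\Om}$ that is \emph{uniform} in $n$. This is legitimate because the constant in \eqref{eadd2} depends on $q$ only through the bounds $\underline q,\bar q$ of the admissible set $K$ (standard elliptic regularity), but the lemma as stated does not spell this out, so a one-line remark would tighten the write-up. Likewise, you might add a word that the Dirichlet trace $u^\sharp|_{\p\Om}=g$ is inherited from the strong $H^1$-convergence, so that the uniqueness part of Lemma~\ref{lem:well} indeed applies to identify $u^\sharp=u(\tilde q)$.
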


In the following, we shall first prove a Lipschitz type stability estimate for the elliptic inverse radiativity problem,
which is of fundamental importance in the verification of the VSC.

\begin{theorem}\label{lemma:stability}

Assume $|u(\qd)|\geq c_0$ for some positive constant $c_0$ in $\Om$, then for any $\epsilon\in (0,\frac{1}{2})$, we have
\beqn\label{stability}
\|q-\qd\|_{H^{-1-\epsilon}(\Omega)}\leq C \|u(q)-u(\qd)\|_{1,\Omega} \quad \forall q\in K.
\eqn

%

\end{theorem}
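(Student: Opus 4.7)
The strategy is to convert the stability question into a duality estimate by exploiting the PDE satisfied by $w:=u(q)-u(\qd)$. Subtracting the elliptic equations \eqref{q1} for $q$ and $\qd$ gives
\begin{equation*}
-\nabla\cdot(a\nabla w)+q\,w \;=\; -(q-\qd)\,u(\qd) \quad \m{in } \Om,\qquad w=0 \quad\m{on } \p\Om,
\end{equation*}
so $(q-\qd)\,u(\qd)$ is represented as a distribution whose $H^{-1}(\Om)$ norm is controlled by $C\|w\|_{1,\Om}$. The remaining task is to ``divide out'' the factor $u(\qd)$ in a negative-order Sobolev norm.

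For an arbitrary $\phi\in H^{1+\epsilon}_0(\Om)$, I would introduce the auxiliary test function $\psi := \phi/u(\qd)$. By Lemma \ref{lem:well} one has $u(\qd)\in H^2(\Om)$; since $d\le 3$, the embedding $H^2\hookrightarrow C(\overline{\Om})$ together with the lower bound $|u(\qd)|\ge c_0>0$ gives $1/u(\qd)\in H^2(\Om)\cap L^\infty(\Om)$. A direct product-rule expansion
\begin{equation*}
\nabla\psi \;=\; \frac{\nabla\phi}{u(\qd)} \;-\; \phi\,\frac{\nabla u(\qd)}{u(\qd)^2},
\end{equation*}
combined with H\"older's inequality and the Sobolev embedding $H^1(\Om)\hookrightarrow L^4(\Om)$ (valid for $d\le 3$) applied to the product $\phi\,\nabla u(\qd)$, shows $\psi\in H^1(\Om)$ with $\|\psi\|_{1,\Om}\le C\|\phi\|_{1,\Om}\le C\|\phi\|_{1+\epsilon,\Om}$. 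Since $\phi$ vanishes on $\p\Om$ in the trace sense and $u(\qd)$ stays bounded away from zero on $\overline{\Om}$, the quotient $\psi$ vanishes on $\p\Om$, so in fact $\psi\in H^1_0(\Om)$.

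Testing the equation for $w$ against $\psi$ in $H^1_0(\Om)$ and using $(q-\qd)\,u(\qd)\,\psi=(q-\qd)\,\phi$ pointwise,
\begin{equation*}
\int_{\Om}(q-\qd)\,\phi\,d\x \;=\; -\int_{\Om}a\,\nabla w\cdot\nabla\psi\,d\x \;-\; \int_{\Om}q\,w\,\psi\,d\x,
\end{equation*}
whence
\begin{equation*}
\left|\int_{\Om}(q-\qd)\,\phi\,d\x\right| \;\le\; C\|w\|_{1,\Om}\,\|\psi\|_{1,\Om} \;\le\; C\|u(q)-u(\qd)\|_{1,\Om}\,\|\phi\|_{1+\epsilon,\Om}.
\end{equation*}
Taking the supremum over $\phi\in H^{1+\epsilon}_0(\Om)$ with $\|\phi\|_{1+\epsilon,\Om}\le 1$, together with the duality $H^{-1-\epsilon}(\Om)=(H^{1+\epsilon}_0(\Om))^*$ (the restriction $\epsilon<\tfrac12$ ensures $1+\epsilon<3/2$, so only the Dirichlet trace is involved and this duality is clean), establishes \eqref{stability}.

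The main technical hurdle is the second step above, namely verifying that multiplication by $1/u(\qd)$ defines a bounded map $H^{1+\epsilon}_0(\Om)\to H^1_0(\Om)$ with a constant depending only on $c_0$, $\underline q$, $\overline q$, $\|a\|_{W^{1,\infty}}$, $\|f\|_{0,\Om}$ and $\|g\|_{\frac32,\p\Om}$. This relies on the Sobolev multiplication/algebra facts of Lemma \ref{lemma:product}, the embeddings available precisely in the admitted dimensions $d\in\{2,3\}$, and a trace argument for the boundary vanishing that uses the positivity hypothesis $|u(\qd)|\ge c_0$; once this mapping property is in hand, the rest of the argument is a routine weak-formulation computation.
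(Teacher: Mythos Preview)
Your argument is correct and follows the same overall strategy as the paper: subtract the two PDEs, recognize $(q-\qd)u(\qd)$ as an $H^{-1}$ distribution controlled by $\|w\|_{1,\Om}$, and pass from $(q-\qd)u(\qd)$ to $q-\qd$ by testing against $\phi/u(\qd)$. The only real difference is in how you bound $\|\phi/u(\qd)\|_{1,\Om}$: the paper invokes the Sobolev multiplication Lemma~\ref{lemma:product} (splitting the cases $d=2$ and $d=3$, and this is where the restriction $\epsilon>0$ enters), whereas you use the product rule together with the elementary embedding $H^1\hookrightarrow L^4$. Your route is slightly more direct and in fact yields $\|\psi\|_{1,\Om}\le C\|\phi\|_{1,\Om}$, so the restriction $\epsilon>0$ is not actually needed for this step in your version; the paper's use of Lemma~\ref{lemma:product} is what forces $\epsilon\in(0,\tfrac12)$ there. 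Your handling of $\psi\in H^1_0(\Om)$ via the trace argument is fine since $1/u(\qd)\in H^2(\Om)\cap C(\overline\Om)$.
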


\begin{proof}
We know easily from system \eqref{q1} that for all $q\in K$,
\begin{equation}\label{eq00}
-\nabla\cdot( a(\x)\nabla( u(\qd)-u(q)))+q(u(\qd)-u(q))
=u(\qd)(q-q^\dag).
\end{equation}
Since  $(u(\qd)-u(q))\in H^1_0(\Om)$,  we can multiply \eqref{eq00} by a function $\varphi\in H_0^1(\Omega)$ and obtain that
\begin{align*}
\left|\int_\Omega u(\qd)(q-q^\dag)\varphi dx \right|
\leq  \overline{q}\int_\Omega |(u(\qd)-u(q))\varphi| dx
+\|a\|_{L^\infty(\Omega)}\|\nabla (u(\qd)-u(q))\cdot \nabla \varphi \|_{0,\Omega},
\end{align*}
which implies that
\beqn\label{eq:almost}
c_1\|u(\qd)(q-q^\dag)\|_{H^{-1}(\Omega)}\leq \|u(q)-u(\qd)\|_{H_0^1(\Omega)}.
\eqn
for some constant $c_1>0$.
By the definition of the $H^{-1-\epsilon}(\Om)$ norm, we have
\beqn\label{eq:inner}
 \|q-q^\dag\|_{H^{-1-\epsilon}(\Omega)}&=&\sup_{\|\varphi\|_{H_0^{1+\epsilon}(\Omega)}=1}
\left|\int_\Omega (q-q^\dag)\varphi(x) d\x \right|\nb\\&=&
\sup_{\|\varphi\|_{H_0^{1+\epsilon}(\Omega)}=1}
\left|\int_\Omega u(\qd)(q-q^\dag)\frac{\varphi(x)}{u(\qd)} d\x \right|.
\eqn
Since $u(\qd)\in H^2(\Omega)$ (by Lemma \ref{lem:well}) with  $|u(\qd)|\geq c_0$,
it follows that $\f{1}{u(\qd)}\in H^2(\Omega)$ by Leibniz' rule.

Now let $w\in H^2(\R^d)$ be an extension of $\f{1}{u(\qd)}$ such that $\|w\|_{H^2(\R^d)}\leq 2\|\f{1}{u(\qd)}\|_{2,\Omega}$, then
$\varphi /u(\qd)=w\varphi \mid_{\Omega}$.  For the space dimension $d=3$,    one has
$1+\epsilon+(\f{3}{2}-\f{\epsilon}{2})-\f{d}{2}>1$ and $1+\epsilon\leq\f{d}{2}$ for
any $\epsilon\in (0,1/2)$, and we have by  using Lemma
\ref{lemma:product} (1) that
\beqn\label{eq:product}
\|w\varphi\|_{1,\R^d}\leq\|w\varphi\|_{1+\f \epsilon 2+\f{3}{2}-\f{d}{2},\R^d}\leq C \|w\|_{\f{3}{2},\R^d}\|\varphi\|_{H^{1+\epsilon}_0(\R^d)}
\leq C\|\f{1}{u(\qd)}\|_{2,\Omega}\|\varphi\|_{H^{1+\epsilon}_0(\Omega)}.
\eqn
where we have used the continuous embedding $H^{\f 3 2}(\R^d)\Subset H^{\f{3-\epsilon}{2}}(\R^d)$.
For the space dimension $d=2$,  ${3}/{2}>{d}/{2}$ and $1+\epsilon>{d}/{2}$ for
any $\epsilon\in (0,1/2)$. Then we obtain by Lemma \ref{lemma:product} (2) that for any $r\in(1,\min\{3/2,1+\epsilon\})$,

\begin{align}\label{eq:product2}
\|w\varphi\|_{1,\Om}\leq\|w\varphi\|_{r,\Om}\leq C \|w\|_{r,\Om}\|\varphi\|_{r,\Om}
\leq& C\|w\|_{\f{3}{2},\Om}\|\varphi\|_{H^{1+\epsilon}_0(\Omega)}\notag \\
\leq& C\|\f{1}{u(\qd)}\|_{2,\Omega}\|\varphi\|_{H^{1+\epsilon}_0(\Omega)}.
\end{align}
Hence, from \eqref{eq:inner}, \eqref{eq:product} and \eqref{eq:product2}
and noting that
$\f{1}{u(\qd)}\in H^2(\Om)$ and $\|\varphi\|_{H^{1+\epsilon}_0(\Omega)}=1$, we get
\beqn
&&\|q-q^\dag\|_{H^{-1-\epsilon}(\Omega)}\leq \|u(\qd)(q-q^\dag)\|_{H^{-1}(\Om)}\|\frac{\varphi(x)}{u(\qd)}\|_{1,\Om}\nb\\
&\leq&C\|u(\qd)(q-q^\dag)\|_{H^{-1}(\Om)}\|\f{1}{u(\qd)}\|_{2,\Omega}\|\varphi\|_{H^{1+\epsilon}_0(\Omega)}\nb\\
&\leq&C\|u(\qd)(q-q^\dag)\|_{H^{-1}(\Om)}.
\label{aj1}
\eqn
This together with \eqref{eq:almost} implies that
\beqnx
\|q-q^\dag\|_{H^{-1-\epsilon}(\Omega)}\leq C\|u(\qd)(q-q^\dag)\|_{H^{-1}(\Om)}
\leq C\|u(q)-u(\qd)\|_{H_0^1(\Omega)}.
\eqnx

\end{proof}

\begin{remark}\label{remark:product}
By the arguments leading to \eqref{eq:product} and \eqref{eq:product2}, we can actually prove that if $w\in H^{3/2}(\Omega)$
and $\epsilon\in (0,1/2)$, then
$$
\|w\varphi \|_{1,\Omega}\leq C\|w\|_{3/2,\Omega}\|\varphi\|_{1+\epsilon,\Omega}\quad \forall \varphi\in C_0^\infty(\Omega).
$$

\end{remark}

\begin{remark}
Since the embedding $H^{-s}(\Omega)\Subset H^{-t}(\Omega)$ is continuous whenever  $t>s\geq 0$, we can obtain that for all $s>1$
and $q\in K$, it holds
\beqnx
\|q-\qd\|_{H^{-s}(\Omega)}\leq C \|u(q)-u(\qd)\|_{1,\Omega} .
\eqnx

\end{remark}

\begin{remark}
As $u(q)-u(\qd)\in H^1_0(\Om)$, then by the Poinc$\acute{a}$re's inequality, we have for all $s>1$ and $q\in K$,
\bb
\|q-q^\dag\|_{H^{-s}(\Omega)}\leq C\|u(q)-u(\qd)\|_{H_0^1(\Omega)}\leq
C\|\nabla u(q)-\nabla u(\qd)\|_{0,\Omega}.
\ee
\end{remark}

Now, we are going to propose a variational source condition, which shall be verified rigorously.
The crucial {\bf variational source condition} is proposed as follows:
\beqn\label{VSC}
\f{1}{4}\|q-q^\dag\|^2_{0,\Omega}
\le \f{1}{2}\|q-q^*\|_{0,\Omega}^2-\f{1}{2}
\|q^\dag-q^*\|_{0,\Omega}^2+C\|u(q)-u(q^\dag)\|_{1,\Omega}^\alpha \quad \forall\,q\in K,
\eqn
where $\alpha$ is selected in Theorem \ref{the:vsc}. Then
using parallelogram law in Hilbert spaces, it is easy to see that \eqref{VSC} is equivalent to
the following inner product form:

\beqn\label{inner}
( q^\dag-q^*,q^\dag-q)_{\Omega}
\le \f{1}{4}\|q-q^\dag\|^2_{0,\Omega}
+C\|u(q)-u(q^\dag)\|_{1,\Omega}^\alpha  \quad \forall\,q\in K.
\eqn

Before verifying condition \eqref{inner}, we still present some preliminaries.
Assume that $\A:=-\Delta$  with domain
$D(\A)=H_0^1(\Omega)\cap H^2(\Omega)$.  It is well-known that the operator $\A: D(\A)\subset L^2(\Omega)\to L^2(\Omega)$  is   densely defined, closed, self-adjoint and $m$-accretive.  Then, in view of the compactness of the embedding $D(\A)\subset  L^2(\Om)$, we infer that there exists a
complete orthonormal basis $\{ e_n\}_{n=1}^\infty \subset L^2(\Om)$ such that
\beqn
(\A u,u)_{L^2(\Omega)}=\sum_{n=1}^\infty \lambda_n |(u,e_n)_{L^2(\Omega)}|^2  \quad \forall\, u\in D(\A),
\eqn
where $\lambda_n$ are the eigenvalues of $\A$ satisfying
$0<\lambda_{1}\leq \lambda_2 \leq \cdots$, $\lim_{n\to\infty} \lambda_n=+\infty$, and for any $n$, $e_n $ is the eigenfunction
of $\A$ for the eigenvalue of $\lambda_n$, i.e.,
$\A e_n=\lambda_n  e_n$.
For every $\theta \in \R$, the fractional power $\A^\theta$ of $\A$ can be defined as
\beqn\label{def:As}
\A^\theta  u:=\sum_{n=1}^\infty \lambda^\theta_n  (u, e_n)_{L^2(\Om)} e_n \q \forall\, u\in D(\A),
\eqn
where the domain $D(\A^\theta )$ is given by
\beqn\label{eq:Bs}
D(\A^\theta)=\{ u\in L^2(\Omega)\mid \,\, \sum_{n=1}^\infty \lambda_n^{2\theta} |(u,e_n)_{L^2(\Om)}|^2 <\infty\}.
\eqn
Moreover,  $\A^\theta: D(\A^\theta)\subset L^2(\Om) \to   L^2(\Om)$ is also self-adjoint, and $D(\A^\theta)$ is a Banach space equipped with the norm
\beqn\label{eq:Asnorm}
\| u \|_{D(\A^\theta )}:=\|\A^\theta  u\|_{0,\Omega}=
\left(\sum_{n=1}^\infty \lambda_n^{2\theta}|(u,e_n)_{L^2(\Omega)}|^2\right)^{1/2} \quad \forall u\in D(\A^\theta),
\eqn
which is also equivalent to the corresponding graph norm of $(\A^\theta,D(\A^\theta))$ (for more details, we refer to \cite{Wloka}).
Let us mention that for all $\theta\in [0,1/4)\bigcup(1/4,1/2]$, it holds that (see \cite{Lions})
\beqn\label{eq:normeq0}
D(\A^{\theta})=H^{2\theta }_0(\Omega).
\eqn

We are now ready to verify the equavilent {\bf variational source condition}  \eqref{inner}.

\begin{theorem}\label{the:vsc}
Assume $|u(\qd)|\geq c_0$ in $\Om$ and  $q^\dag-q^*\in H_0^\kappa(\Omega)$ with $\kappa>0$ and $\kappa\neq 1/2$,
then VSC \eqref{inner} holds with  some parameter $\alpha$ such that
$$
\begin{cases}
\alpha=1 \quad &\text{if}\,\, \kappa>1,\\
\alpha<\f{2\kappa}{1+\kappa}\,\textup{but it can be choosen arbitrarily close to}\,\,\f{2\kappa}{1+\kappa}
\quad &\text{if}\,\, \kappa\in(0,\frac{1}{2})\cup (\frac{1}{2},1].\\
\end{cases}
$$

\end{theorem}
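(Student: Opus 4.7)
My plan is to control the inner product on the left of \eqref{inner} by a duality pairing with a suitable negative-order Sobolev norm, invoke the Lipschitz stability of Theorem \ref{lemma:stability}, and then, in the low-regularity regime, combine interpolation against $L^2$ with Young's inequality to absorb the residual $L^2$-piece into the $\tfrac14\|q-q^\dag\|_{0,\Omega}^2$ term on the right-hand side. The two cases on $\kappa$ differ only in whether the interpolation step is needed.

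When $\kappa>1$, the argument is essentially immediate. I would pick $\epsilon\in(0,\min\{\kappa-1,1/2\})$ so that the embedding $H_0^\kappa(\Omega)\hookrightarrow H_0^{1+\epsilon}(\Omega)$ is available, and simply estimate
\[
(q^\dag-q^*,q^\dag-q)_\Omega\;\le\;\|q^\dag-q^*\|_{H_0^{1+\epsilon}(\Omega)}\,\|q^\dag-q\|_{H^{-1-\epsilon}(\Omega)}\;\le\;C\,\|u(q)-u(q^\dag)\|_{1,\Omega},
\]
the last step by Theorem \ref{lemma:stability}. This is \eqref{inner} with $\alpha=1$ after trivially discarding the $\tfrac14\|q-q^\dag\|_{0,\Omega}^2$ term.

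For $\kappa\in(0,1/2)\cup(1/2,1]$, I would instead pair at the regularity index $\kappa$ itself,
\[
(q^\dag-q^*,q^\dag-q)_\Omega\;\le\;\|q^\dag-q^*\|_{H_0^\kappa(\Omega)}\,\|q^\dag-q\|_{H^{-\kappa}(\Omega)},
\]
and interpolate the second factor between $L^2(\Omega)$ and $H^{-1-\epsilon}(\Omega)$. With $\epsilon\in(0,1/2)$ arbitrary and $\theta=\kappa/(1+\epsilon)\in(0,1)$, the interpolation inequality combined with Theorem \ref{lemma:stability} and the uniform $L^2$-boundedness of $K$ gives
\[
\|q^\dag-q\|_{H^{-\kappa}(\Omega)}\;\le\;C\,\|q^\dag-q\|_{0,\Omega}^{\,1-\theta}\,\|u(q)-u(q^\dag)\|_{1,\Omega}^{\,\theta}.
\]
Weighted Young's inequality with the conjugate pair $p=2/(1-\theta)$ and $p'=2/(1+\theta)$, calibrated so that the coefficient in front of $\|q-q^\dag\|_{0,\Omega}^{2}$ equals $\tfrac14$, then yields \eqref{inner} with exponent
\[
\alpha\;=\;\frac{2\theta}{1+\theta}\;=\;\frac{2\kappa}{1+\epsilon+\kappa}.
\]
Since this tends to $2\kappa/(1+\kappa)$ as $\epsilon\to 0^+$, the exponent $\alpha$ can be driven strictly below but arbitrarily close to $2\kappa/(1+\kappa)$, as claimed.

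The main obstacle I anticipate is the justification of the interpolation identity $[L^2(\Omega),H^{-1-\epsilon}(\Omega)]_\theta=H^{-\kappa}(\Omega)$ on the bounded domain. By duality this reduces to the Lions--Magenes identification $[L^2(\Omega),H_0^{1+\epsilon}(\Omega)]_\theta=H_0^{(1+\epsilon)\theta}(\Omega)$, which is valid exactly when the intermediate index $(1+\epsilon)\theta=\kappa$ is not the exceptional half-integer $1/2$. This is precisely what the hypothesis $\kappa\neq 1/2$ guarantees; the freedom to take $\epsilon$ arbitrarily small in $(0,1/2)$ (while avoiding any finitely many exceptional values, should they occur) is then what pushes $\alpha$ up to the sharp limit $2\kappa/(1+\kappa)$.
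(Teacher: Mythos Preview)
Your argument is correct and reaches the same exponent $\alpha=\frac{2\kappa}{1+\epsilon+\kappa}\to\frac{2\kappa}{1+\kappa}$ as the paper, but by a genuinely different route in the low-regularity case $\kappa\in(0,1/2)\cup(1/2,1]$. The paper does not interpolate $\|q^\dag-q\|_{H^{-\kappa}}$ between $L^2$ and $H^{-1-\epsilon}$; instead it expands $q^\dag-q^*$ in the Dirichlet--Laplacian eigenbasis, introduces a spectral cutoff $P_\lambda$ at frequency $\lambda$, treats the high-frequency tail $(I-P_\lambda)(q^\dag-q^*)$ by Young's inequality together with the decay bound $\|(I-P_\lambda)(q^\dag-q^*)\|_{0,\Omega}^2\le \lambda^{-\kappa}\|q^\dag-q^*\|_{D(\mathcal{A}^{\kappa/2})}^2$, treats the low-frequency piece by pairing at level $H_0^s\times H^{-s}$ ($s>1$) and invoking the stability estimate, and finally optimizes over $\lambda$. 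The two arguments are morally the same---your interpolation step is precisely what the spectral splitting encodes, since $D(\mathcal{A}^{\kappa/2})=H_0^\kappa(\Omega)$ for $\kappa\neq 1/2$---but your version is shorter and uses only off-the-shelf interpolation theory, while the paper's version is more explicit and self-contained (it never needs to cite the Lions--Magenes identification $[L^2,H_0^{1+\epsilon}]_\theta=H_0^{(1+\epsilon)\theta}$, only the domain characterization \eqref{eq:normeq0}). For the case $\kappa>1$ the two proofs are essentially identical.
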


\begin{proof}
	
Firstly, it is immediately to see that \eqref{inner} holds if $\qd-q^*=0$.  In the sequel, we
shall consider the case when $\qd-q^*\neq 0$.

Now if $\qd-q^*\neq 0 $ and $\kappa>1$, then by making use of  Theorem \ref{lemma:stability},
we have
\beqnx
&& |(\qd-q^*,\qd-q)_{\Omega}|\leq \|\qd-q^*\|_{H_0^\kappa(\Omega)}
 \|\qd-q\|_{H^{-\kappa}(\Omega)}\\&\leq& C  \|\qd-q^*\|_{H_0^\kappa(\Omega)}\|u(\qd)-u(q)\|_{1,\Omega}
 \leq C  \|u(\qd)-u(q)\|_{1,\Omega},
\eqnx
which verifies \eqref{inner} with $\alpha=1$.

Next, we start to consider the case when $\kappa\in (0,\frac{1}{2})\cup (\frac{1}{2},1]$.
For each $\lambda>0$,  we define a family of orthogonal projections
$$
P_\lambda u:=\sum_{\lambda_n<\lambda}(u,e_n)_{\Omega}e_n.
$$
And if $\lambda<\lambda_1$, we set $P_\lambda=0$.
Then the Young's inequality yields
\begin{align}\label{eq:vsc:00}
|((I-P_\lambda)(\qd-q^*),\qd-q)_{\Omega}|
\le\f{\|\qd-q\|^2}{4}+ \|(I-P_\lambda)(\qd-q^*)\|^2_{0,\Omega}.
\end{align}
As $q^\dag-q^*\in H_0^\kappa(\Omega)$, we have from
\eqref{eq:normeq0} that $\qd-q^*\in D(\A^{\kappa/2})$.
Hence, by the definition of $P_\lambda$, it is readily to see that
\beqn\label{eq:vsc:0}
&&\|(I-P_\lambda)(\qd-q^*)\|_{0,\Omega}^2
=\sum_{\lambda_n\geq \lambda  }|(\qd-q^*,e_n)_{\Omega}|^2\notag\\
&\leq&\f{\sum_{n\geq 1} \lambda_n^{\kappa }|(\qd-q^*,e_n)|^2}{\lambda^{\kappa }}=
\f{\|\qd-q^*)\|_{D(A^{\kappa/2})}^2}{\lambda^{\kappa }}.
\eqn
On the other hand,  for any $s>1$, it yields by Theorem  \ref{lemma:stability} that
\beqn\label{eq:vsc:1}
 |(P_\lambda(\qd-q^*),\qd-q)_{\Omega}|&\leq& \|(P_\lambda(\qd-q^*)\|_{H^{s}_0(\Omega)}\|\qd-q\|_{H^{-s}(\Omega)}\nb\\
 &\leq& C\|(P_\lambda(\qd-q^*)\|_{H^{s}_0(\Omega)}\|u(\qd)-u(q)\|_{1,\Omega}.
\eqn
We then estimate $\|(P_\lambda(\qd-q^*)\|_{H^{s}_0(\Omega)}$. Indeed, by \eqref{eq:normeq0} one has
\begin{align}
\|(P_\lambda(\qd-q^*)\|^2_{H^{s}_0(\Omega)}\leq& C
\|(P_\lambda(\qd-q^*)\|^2_{D(\A^{\f{s}{2}})}
=\sum_{\lambda_n<\lambda}\lambda_n^{s}  |(\qd-q^*,e_n)|^2\notag\\
=&\sum_{\lambda_n<\lambda}\lambda_n^{{s-\kappa}}\cdot \lambda_n^{\kappa } |(\qd-q^*,e_n)|^2\notag \leq \lambda^{{s-\kappa}}\|\qd-q^*\|_{D(\A^{\kappa/2})}^2,\notag
\end{align}
which, togother with \eqref{eq:vsc:1}, implies
\beqn\label{eq:vsc:02}
 |(P_\lambda(\qd-q^*),\qd-q)_{\Omega}|\leq C  \lambda^{\f{s-\kappa}{2}}\|\qd-q^*\|_{D(\A^{\kappa/2})}\|u(\qd)-u(q)\|_{1,\Omega}.
\eqn
Combing \eqref{eq:vsc:00},\eqref{eq:vsc:0} and \eqref{eq:vsc:02}, we have
\begin{align}\label{vsc:final}
(\qd-q^*,\qd-q)_{\Omega}\leq& \f{\|\qd-q\|^2}{4}\\
&+CA\inf_{\lambda>0}
\left(\f{A}{\lambda^{{\kappa} }}+\lambda^{\f{s-\kappa}{2}}\|u(\qd)-u(q)\|_{1,\Omega}\right)\notag
\quad \forall\, q\in K,
\end{align}
 where $A=\|(\qd-q^*)\|_{D(\A^{\kappa/2})}$.  Choosing $\f{A}{\lambda^{ {\kappa}}}=\lambda^{\f{s-\kappa}{2}}\|u(\qd)-u(q)\|_{1,\Omega}$,
i.e.,  $\lambda^{\f{s+\kappa}{2}}=\f{A}{\|u(q^\dag)-u(q)\|_{1,\Omega}}$ in
\eqref{vsc:final},  we know that
\beqn
(\qd-q^*,\qd-q)_{\Omega}\leq\f{\|\qd-q\|^2}{4}+2C A A^{\f{s-\kappa}{s+\kappa}}
\|u(\qd)-u(q)\|^{\f{2\kappa}{s+\kappa}} \quad \forall\, q\in K.
\eqn
Since $s>1\geq k>0$, then we have $ A A^{\f{s-\kappa}{s+\kappa}}\leq C$ and
$\f{2\kappa}{s+\kappa}<\f{2\kappa}{1+\kappa}$, which completes the proof.

\end{proof}

\begin{remark}

Recalling the convergence rates Theorem 3.4 in \cite{dinh10}, they assumed $\frac{q^\dag-q^*}{u(q^\dag)}\in H^1(\Omega)$
and obtained the convergence rate $\|\qda-q^\dag\|_{0,\Omega}=O(\sqrt{\delta})$. But to verify the proposed
source condition (3.21) in \cite{dinh10}, they should assume the rediativity $q^\dag$ to be known on the whole boundary.
However, it is well-known that
$H_0^\kappa(\Omega)=H^\kappa(\Omega)$  when $\kappa\in (0,1/2)$ (See e.g. \cite[Theorem1.40]{Yagibook}). Therefore,
when $\kappa\in (0,1/2)$
we don't need any priori knowledge of $\qd$ on the boundary, but only assume that $q^\dag-q^*\in H^\kappa(\Omega)$.

\end{remark}

We end this section by establishing the following theorem for the summarization of
the main convergence rates results.


\begin{theorem}\label{the:ellip:con}
Assume $|u(\qd)|\geq c_0$ in $\Om$ and   $q^\dag-q^*\in H_0^\kappa(\Omega)$ with $\kappa>0$ and $\kappa\neq 1/2$,
 and $\alpha$ is the
parameter chosen as in Theorem \ref{the:vsc},  then we have the following convergence rates
\beqn\label{eq:convergence0}
\|\nabla u(\qda)-\nabla u(\qd)\|_{0,\Omega}=O(\delta) 
\eqn
and
\beqn\label{eq:convergence1}
\|\qda-\qd\|_{0,\Omega}=O(\delta^{\f{\alpha}{2}}) 
\eqn
under the parameter choice  $\beta=\delta^{2-\alpha}$.
\end{theorem}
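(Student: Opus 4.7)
The plan is to chain together three ingredients: the minimizing property of $\qda$, the variational source condition \eqref{VSC} established in Theorem \ref{the:vsc}, and Young's inequality with exponent $2/\alpha$ to absorb the residual nonlinear term on the left.

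First I would exploit $J_{\delta,\beta}(\qda) \leq J_{\delta,\beta}(\qd)$ together with the noise bound \eqref{eq:data} to obtain
\beqnx
\tfrac{1}{2}\|\nabla u(\qda) - \nabla z^\delta\|^2_{0,\Om} + \tfrac{\beta}{2}\|\qda - q^*\|^2_{0,\Om} \leq \tfrac{\delta^2}{2} + \tfrac{\beta}{2}\|\qd - q^*\|^2_{0,\Om}.
\eqnx
Applying $(x+y)^2\leq 2x^2+2y^2$ to the splitting $\nabla u(\qda)-\nabla u(\qd)=(\nabla u(\qda)-\nabla z^\delta)+(\nabla z^\delta-\nabla u(\qd))$ converts the discrepancy term into a clean distance from the true solution, yielding
\beqnx
\tfrac{1}{4}\|\nabla u(\qda) - \nabla u(\qd)\|^2_{0,\Om} + \tfrac{\beta}{2}\|\qda - q^*\|^2_{0,\Om} \leq \delta^2 + \tfrac{\beta}{2}\|\qd - q^*\|^2_{0,\Om}.
\eqnx

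Next I would add $\beta$ times the VSC \eqref{VSC} with $q=\qda$ to the above; the two $\beta\|\cdot-q^*\|^2/2$ contributions cancel, leaving
\beqnx
\tfrac{1}{4}\|\nabla u(\qda) - \nabla u(\qd)\|^2_{0,\Om} + \tfrac{\beta}{4}\|\qda - \qd\|^2_{0,\Om} \leq \delta^2 + C\beta\,\|u(\qda)-u(\qd)\|_{1,\Om}^\alpha.
\eqnx
Since $u(\qda)-u(\qd)\in H^1_0(\Om)$, Poincar\'e's inequality dominates the right-hand $H^1$ norm by $a:=\|\nabla u(\qda)-\nabla u(\qd)\|_{0,\Om}$. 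Writing $b:=\|\qda-\qd\|_{0,\Om}$ and recalling from Theorem \ref{the:vsc} that $\alpha\leq 1<2$, Young's inequality with conjugate exponents $2/\alpha$ and $2/(2-\alpha)$ gives
\beqnx
C\beta a^\alpha \leq \tfrac{a^2}{8} + C'\beta^{2/(2-\alpha)}.
\eqnx

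Absorbing $a^2/8$ into the left-hand side and substituting the parameter choice $\beta=\delta^{2-\alpha}$ (so that $\beta^{2/(2-\alpha)}=\delta^2$) yields $a^2 + \beta b^2 \leq C\delta^2$. This immediately produces \eqref{eq:convergence0}, while \eqref{eq:convergence1} follows from $b^2 \leq C\delta^2/\beta = C\delta^\alpha$. No step here is expected to be a genuine obstacle; the real effort of the paper is upstream, in proving the Lipschitz-type stability Theorem \ref{lemma:stability} and using it to verify \eqref{VSC} in Theorem \ref{the:vsc}. Once those are in hand, the present theorem is essentially a bookkeeping exercise in choosing the constants in the squared triangle inequality and in Young's inequality so that both $a^2$ contributions stay absorbable on the left.
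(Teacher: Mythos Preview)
Your argument is correct and follows the same overall skeleton as the paper: exploit the minimality of $\qda$, convert the data discrepancy into $\|\nabla u(\qda)-\nabla u(\qd)\|_{0,\Omega}$ via the triangle inequality, invoke the VSC \eqref{VSC}, and use Poincar\'e to close the loop. The one place where you diverge is in handling the term $C\beta a^\alpha$ after arriving at the key inequality $\tfrac14 a^2+\tfrac{\beta}{4}b^2\le \delta^2+C\beta a^\alpha$. The paper does not use Young's inequality here; instead it distinguishes the two cases $a<\delta$ and $a\ge\delta$, and in the latter uses $\alpha\le1$ to write $a^\alpha\le a\,\delta^{\alpha-1}$, which with $\beta=\delta^{2-\alpha}$ gives $C\beta a^\alpha\le C\delta a$ and hence $a^2\le C\delta a$. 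Your Young-inequality absorption with exponents $2/\alpha$ and $2/(2-\alpha)$ is more streamlined and avoids the case split entirely; it also has the mild advantage of working verbatim for any $\alpha\in(0,2)$, whereas the paper's bound $a^\alpha\le a\,\delta^{\alpha-1}$ genuinely needs $\alpha\le1$. Both routes yield the same rates under the same parameter choice $\beta=\delta^{2-\alpha}$.
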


\begin{proof}
%
By the definition of $\qda$ in \eqref{dai3} and using \eqref{eq:data}, we have
\begin{align}\label{eqc}
\f{1}{2}\|\nabla u(\qda)-\nabla z^\delta\|_{0,\Om}^2+ \f \beta 2
\|\qda -q^*\|_{0,\Omega}^2\le& \f{1}{2}\|\nabla u(\qd)-\nabla z^\delta\|_{0,\Om}^2+\f  \beta 2
\|q^\dag-q^*\|_{0,\Omega}^2\notag\\
\le& \f 1 2 \delta^2+ \f \beta 2
\|q^\dag-q^*\|_{0,\Omega}^2,
\end{align}
which implies
\begin{align}\label{eqb}
\f{1}{2}
\|\qda-q^*\|^2_{0,\Omega}-
\f{1}{2}\|q^\dag-q^*\|_{0,\Omega}^2\le& \f{\delta^2}{2\beta}-\f{1}{2\beta}  \|\nabla u(\qda)-\nabla z^\delta\|_{0,\Om}^2
\leq  \f{\delta^2}{2\beta}.
\end{align}
Using  \eqref{VSC}, \eqref{eqb} and triangle inequality, we have
\begin{align}\label{eqd}
0\le& \f{1}{2}\|\qda-q^*\|_{0,\Omega}^2-\f{1}{2}
\|\qd-q^*\|_{0,\Omega}^2+C\|u(\qda)-u(q^\dag)\|_{1,\Om}^\alpha \notag \\
\le& \f{1}{2\beta}\left(\delta^2-\|\nabla u(\qda)-\nabla z^\delta\|_{0,\Om}^2\right)+C\|u(\qda)-u(q^\dag)\|_{1,\Om}^\alpha\nb\\
\le& \f{1}{2\beta}\left(2\delta^2-\frac{1}{2}\|\nabla u(\qda)-\nabla u(\qd)\|_{0,\Om}^2\right)+C\|u(\qda)-u(q^\dag)\|_{1,\Om}^\alpha.
\end{align}
As $u(\qda)-u(\qd)\in H^1_0(\Om)$, then by the Poinc$\acute{a}$re's inequality, we have
\beqnx
\|u(\qda)-u(\qd)\|_{1,\Omega}\leq
C\|\nabla u(\qda)-\nabla u(\qd)\|_{0,\Omega},
\eqnx
which together with \eqref{eqd} implies that
\beqn\label{eq:key}
\|\nabla u(\qda)-\nabla u(\qd)\|_{0,\Omega}^2
\le 4\delta^2  +C\beta \|\nabla u(\qda)-\nabla u(\qd)\|_{0,\Omega}^\alpha.
\eqn

Now if $ \|\nabla u(\qda)-\nabla u(\qd)\|_{0,\Omega}<\delta$, then one has proved the convergence
rate \eqref{eq:convergence0}. Otherwise, if
 $ \|\nabla u(\qda)-\nabla u(\qd)\|_{0,\Omega}\geq \delta$, as $\alpha\leq 1$, then one has
 $$
  \|\nabla u(\qda)-\nabla u(\qd)\|_{0,\Omega}^\alpha
  \leq   \|\nabla u(\qda)-\nabla u(\qd)\|_{0,\Omega} \delta^{\alpha-1}.
 $$
Taking the above inequality into \eqref{eq:key} and choosing $\beta=\delta^{2-\alpha}$, we get
\beqnx
\|\nabla u(\qda)-\nabla u(\qd)\|_{0,\Omega}^2
&\le& 4\delta^2  +C\delta^{2-\alpha}\|\nabla u(\qda)-\nabla u(\qd)\|_{0,\Omega} \delta^{\alpha-1}\\
&=&4\delta^2  +C\delta\|\nabla u(\qda)-\nabla u(\qd)\|_{0,\Omega}\\
&\leq&(4+C)\delta\|\nabla u(\qda)-\nabla u(\qd)\|_{0,\Omega},
\eqnx
which implies $\|\nabla u(\qda)-\nabla u(\qd)\|_{0,\Omega}=O(\delta)$. Therefore,
  \eqref{eq:convergence0} holds.

Finally, using  \eqref{VSC}, \eqref{eqb} and Poinc$\acute{a}$re's inequality, we obtain
\beqnx
\f{1}{4}\|\qda-q^\dag\|^2_{0,\Omega}
&\le& \f{1}{2}\|\qda-q^*\|_{0,\Omega}^2-\f{1}{2}
\|q^\dag-q^*\|_{0,\Omega}^2+C\|u(\qda)-u(q^\dag)\|_{1,\Omega}^\alpha\\
&\le& \frac{\delta^2}{2\beta}+C\|\nabla u(\qda)-\nabla u(\qd)\|_{0,\Omega}^\alpha.
\eqnx
Then choosing $\beta=\delta^{2-\alpha}$ and using \eqref{eq:convergence0}, we have
\beqnx
\f{1}{4}\|\qda-q^\dag\|^2_{0,\Omega}
\le \frac{\delta^2}{2\delta^{2-\alpha}}+C\delta^\alpha\leq C\delta^\alpha,
\eqnx
which verifies \eqref{eq:convergence1}.

\end{proof}

\begin{corollary}\label{coro:ellip}
Under the hypothesises and settings of Theorem \ref{the:ellip:con} and assume that $2\leq p<+\infty$, we then have the convergence rate
\beqn\label{eq:convergence12}
\|\qda-\qd\|_{L^p(\Omega)}=O(\delta^{\f{\alpha}{p}}) 
\eqn
under the parameter choice  $\beta=\delta^{2-\alpha}$.

\end{corollary}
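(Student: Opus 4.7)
The plan is to reduce the $L^p$ estimate to the already-established $L^2$ convergence rate by exploiting the uniform $L^\infty$ bound built into the admissible set $K$. Both $\qda$ and $\qd$ lie in $K$, so pointwise $0<\underline q\le \qda,\qd\le \bar q$ a.e., which gives the cheap bound $\|\qda-\qd\|_{L^\infty(\Omega)}\le 2\bar q$.

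Given that, for $2\le p<\infty$ I would split the $L^p$ integrand as $|\qda-\qd|^p=|\qda-\qd|^{p-2}\cdot|\qda-\qd|^2$ and use the $L^\infty$ bound on the first factor. Explicitly,
\begin{equation*}
\|\qda-\qd\|_{L^p(\Omega)}^p
=\int_\Omega |\qda-\qd|^{p-2}|\qda-\qd|^2\,d\x
\le (2\bar q)^{p-2}\|\qda-\qd\|_{0,\Omega}^2,
\end{equation*}
so $\|\qda-\qd\|_{L^p(\Omega)}\le C\,\|\qda-\qd\|_{0,\Omega}^{2/p}$ with a constant depending only on $p$ and $\bar q$. (Equivalently one can invoke the standard interpolation inequality $\|\cdot\|_{L^p}\le \|\cdot\|_{L^\infty}^{1-2/p}\|\cdot\|_{L^2}^{2/p}$.)

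Finally I would insert the rate from Theorem \ref{the:ellip:con}, namely $\|\qda-\qd\|_{0,\Omega}=O(\delta^{\alpha/2})$ under the choice $\beta=\delta^{2-\alpha}$, to conclude
\begin{equation*}
\|\qda-\qd\|_{L^p(\Omega)}\le C\bigl(\delta^{\alpha/2}\bigr)^{2/p}=O(\delta^{\alpha/p}),
\end{equation*}
which is the desired estimate. There is no real obstacle here; the only thing to be careful about is that the constant is indeed independent of $\delta$ and $\beta$, which follows from the uniform bounds in the definition of $K$ and from the fact that the bound in Theorem \ref{the:ellip:con} is already uniform under the stated parameter choice.
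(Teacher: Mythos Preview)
Your proof is correct and essentially identical to the paper's: the paper also uses the uniform bound $\|\qda-\qd\|_{L^\infty(\Omega)}\le 2\bar q$ from $K$, applies the interpolation $\|\qda-\qd\|_{L^p(\Omega)}\le (2\bar q)^{(p-2)/p}\|\qda-\qd\|_{L^2(\Omega)}^{2/p}$ (phrased there as H\"older's inequality, whereas you write out the pointwise splitting), and then invokes Theorem~\ref{the:ellip:con}.
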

\begin{proof}
Since $\|\qda-\qd\|_{L^\infty(\Omega)}\leq  2\bar q$ for all $\qda\in K$, we can obtain by H\"{o}lder's inequality that for any $2\leq p<+\infty$,
$$
\|\qda-\qd\|_{L^p(\Omega)}\leq (2\bar q)^{\f{p-2}{p}} \|\qda-\qd\|_{L^2(\Omega)}^{\f{2}{p}},
$$
which, together with Theorem \ref{the:ellip:con}, completes the proof.
\end{proof}

\subsection{Measurement data in $L^2$-norm}
In this subsection, we aim at recovering $q(\x)$ from the $L^2$-noisy data of $u(\qd)$.  That is, we assume that the measurable data $z^\delta$ of $u(q)$ is noisy in $L^2(\Om)$ with a noise level $\delta$, namely
\bb\label{eq:datal2}
\| u(\qd)-  z^\delta\|_{0,\Om} \leq\delta\,,
\ee
where  $\qd$ is the true physical radiativity. The elliptic inverse radiativity problem  is transformed into an effective and stable minimisation system  with Tikhonov regularization:
\bb
\min_{q\in K}J_{\delta,\beta}(q)=\min_{q\in K}\Big(\f{1}{2}\| u(q)-z^\delta\|_{0,\Om}^2
+ \f \beta  2 \|q- q^*\|^2_{0,\Om}\Big),
\label{dai32}
\ee
where $\beta>0$ is the regularization parameter and $q^*\in K$ is an a priori estimate of the true solution.
In this subsection, we denote by $\qda $ the
minimizer of \eqref{dai32}.  One will see that if we lose the information of $\nabla z^\delta$, then the convergence rate of  $\qda$ is
slower (See Theorem \ref{the:ellip:con:l2} below).
To study the convergence rate of  $\qda$, we need the H\"{o}lder type stability estimate for the elliptic inverse radiativity problem.

\begin{theorem}\label{lemma:stability:l2}

Assume $|u(\qd)|\geq c_0$ in $\Om$, then for any $\epsilon\in (0,\frac{1}{2})$, we have
\beqn\label{stabilityl2}
\|q-\qd\|_{H^{-1-\epsilon}(\Omega)}\leq C \|u(q)-u(\qd)\|_{0,\Omega} ^{\f{1}{2}}\quad \forall q\in K.
\eqn

%

\end{theorem}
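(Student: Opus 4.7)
The strategy is to derive the H\"{o}lder stability \eqref{stabilityl2} by bootstrapping the Lipschitz estimate \eqref{stability} already proved in Theorem \ref{lemma:stability}. Specifically, I would reduce the task to showing the interpolation-flavored bound
$$
\|u(q)-u(\qd)\|_{1,\Om}\;\leq\; C\,\|u(q)-u(\qd)\|_{0,\Om}^{1/2}\qquad \forall\,q\in K,
$$
after which Theorem \ref{lemma:stability} immediately yields \eqref{stabilityl2} by composition.

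To obtain this auxiliary bound, first set $w:=u(\qd)-u(q)\in H^1_0(\Om)$, which according to \eqref{eq00} satisfies
$$
-\nabla\cdot(a\nabla w)+qw=u(\qd)(q-\qd)\quad\text{in }\Om.
$$
Testing this weak formulation with $\varphi=w$ itself and using that $a\geq\underline{a}>0$ and $q\geq\underline{q}>0$ gives
$$
\min(\underline a,\underline q)\,\|w\|_{1,\Om}^2\;\leq\;\int_{\Om}a|\nabla w|^2+\int_\Om q w^2\;=\;\int_\Om u(\qd)(q-\qd)\,w.
$$
For the right-hand side I would apply Cauchy--Schwarz:
$$
\int_\Om u(\qd)(q-\qd)\,w\;\leq\;\|u(\qd)(q-\qd)\|_{0,\Om}\|w\|_{0,\Om}.
$$
Since $q,\qd\in K$ are pointwise bounded by $\bar q$ so that $\|q-\qd\|_{L^\infty(\Om)}\leq 2\bar q$, and since Lemma \ref{lem:well} guarantees $u(\qd)\in H^2(\Om)\hookrightarrow L^2(\Om)$ with a norm controlled by the given data only, there is a constant $C$ (depending only on $a$, $f$, $g$, $\underline q$, $\bar q$ and $\Om$) such that $\|u(\qd)(q-\qd)\|_{0,\Om}\leq C$. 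Combining yields $\|w\|_{1,\Om}^2\leq C\|w\|_{0,\Om}$, i.e.\ the desired interpolation inequality.

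Chaining with Theorem \ref{lemma:stability} completes the proof:
$$
\|q-\qd\|_{H^{-1-\epsilon}(\Om)}\;\leq\;C\|u(q)-u(\qd)\|_{1,\Om}\;\leq\;C'\|u(q)-u(\qd)\|_{0,\Om}^{1/2}.
$$
There is no real obstacle here; the only point to check carefully is that the constant $C$ in the bound on $\|u(\qd)(q-\qd)\|_{0,\Om}$ is uniform over $q\in K$, which is immediate from the admissible-set constraints on $q$ and $\qd$ together with the $H^2$ a priori bound \eqref{eadd2}. The exponent $1/2$ in \eqref{stabilityl2} reflects precisely the fact that we have traded the gradient information in the data for half an order of smoothness through the elliptic energy estimate.
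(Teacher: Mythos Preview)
Your proof is correct, and it takes a genuinely different route from the paper's argument. Both approaches reduce the task to proving the intermediate bound $\|u(q)-u(\qd)\|_{1,\Om}\leq C\|u(q)-u(\qd)\|_{0,\Om}^{1/2}$ and then invoke Theorem~\ref{lemma:stability}. The paper obtains this bound by combining the Gagliardo--Nirenberg interpolation inequality $\|w\|_{1}\leq C\|w\|_{2}^{1/2}\|w\|_{0}^{1/2}$ with a uniform $H^2$ a~priori estimate for $u(q)-u(\qd)$ coming from Lemma~\ref{lem:well}. You instead test the difference equation directly with $w=u(\qd)-u(q)$, which immediately yields $\|w\|_{1}^2\leq C\|w\|_{0}$ from the energy identity and a uniform $L^2$ bound on the right-hand side. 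Your argument is more elementary: it bypasses both the $H^2$ elliptic regularity and the interpolation inequality, using only the variational structure of the equation together with the $L^\infty$ bounds built into the admissible set $K$. The paper's route, on the other hand, separates the two ingredients (regularity of the forward map and interpolation) more transparently, which makes the analogous parabolic estimate in Lemma~\ref{lemma:instab:para:l2} follow by the same template.
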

\begin{proof}
From Theorem \ref{lemma:stability}  and the well-known Gagliardo-Nirenberg interpolation inequality
\beqn\label{Sob:interp}
\|u\|_{1,\Omega}\leq C\|u\|_{2,\Omega}^{\f  1 2} \|u\|_{0,\Omega}^{\f 1 2}\quad \forall\, u\in H^2(\Omega),
\eqn
it suffices to show that
\beqn\label{bound:H2}
 \|u(q)-u(\qd)\|_{2,\Omega}\leq C\quad \forall q\in K.
\eqn

In view of \eqref{eq00}, we know that
\beqnx
-\nabla\cdot( a(\x)\nabla( u(\qd)-u(q)))+q(u(\qd)-u(q))=u(\qd)(q-q^\dag).
\eqnx
Then by Lemma \ref{lem:well}, we have
\beqn\label{eq223}
\| u(\qd)-u(q)\|_{2,\Omega}\leq C\|u(\qd)(q-q^\dag)\|_{0,\Omega}
\leq 2C\overline{q}\|u(q^\dag)\|_{0,\Omega},
\eqn
which yields \eqref{bound:H2}.
\end{proof}
Then we can prove the following analogue of Theorem \ref{the:vsc}, whose proof is the same except
that we use the results in Theorem  \ref{lemma:stability:l2} instead of Theorem \ref{lemma:stability}.

\begin{theorem}\label{the:vsc:l2}
Assume $|u(\qd)|\geq c_0$ in $\Om$ and   $q^\dag-q^*\in H_0^\kappa(\Omega)$ with $\kappa>0$ and $\kappa\neq 1/2$,
then VSC \beqn\label{VSC:l2}
\f{1}{4}\|q-q^\dag\|^2_{0,\Omega}
\le \f{1}{2}\|q-q^*\|_{0,\Omega}^2-\f{1}{2}
\|q^\dag-q^*\|_{0,\Omega}^2+C\|u(q)-u(q^\dag)\|_{0,\Omega}^{\alpha } \quad \forall\,q\in K,
\eqn
holds with  some constant $C>0$ and parameter  $\alpha$ such that
$$
\begin{cases}
\alpha=1/2 \quad &\text{if}\,\, \kappa>1,\\
\alpha<\f{\kappa}{1+\kappa}\,\textup{but it can be choosen arbitrarily close to}\,\,\f{\kappa}{1+\kappa}
\quad &\text{if}\,\, \kappa\in(0,\frac{1}{2})\cup(\frac{1}{2},1].\\
\end{cases}
$$

\end{theorem}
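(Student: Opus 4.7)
The plan is to mirror the argument of Theorem \ref{the:vsc} step by step, substituting the $L^2$-type stability estimate \eqref{stabilityl2} for the $H^1$-type estimate used in the gradient case. First I would dispose of the trivial case $q^\dag-q^*=0$, for which \eqref{VSC:l2} reduces to the nonnegativity of $\tfrac14\|q-q^\dag\|_{0,\Omega}^2$. Then I would split into the regimes $\kappa>1$ and $\kappa\in(0,1/2)\cup(1/2,1]$ exactly as before.

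For $\kappa>1$, I would use the duality pairing between $H_0^\kappa(\Omega)$ and $H^{-\kappa}(\Omega)$ together with Theorem \ref{lemma:stability:l2}:
\begin{equation*}
|(q^\dag-q^*,q^\dag-q)_\Omega|\le \|q^\dag-q^*\|_{H_0^\kappa(\Omega)}\|q^\dag-q\|_{H^{-\kappa}(\Omega)}\le C\|q^\dag-q^*\|_{H_0^\kappa(\Omega)}\|u(q^\dag)-u(q)\|_{0,\Omega}^{1/2},
\end{equation*}
which, combined with the parallelogram identity linking \eqref{VSC:l2} and its inner-product form, yields the claim with $\alpha=1/2$.

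For $\kappa\in(0,1/2)\cup(1/2,1]$, I would reuse the spectral projector $P_\lambda$ associated with $\A=-\Delta$ and split
\begin{equation*}
(q^\dag-q^*,q^\dag-q)_\Omega = (P_\lambda(q^\dag-q^*),q^\dag-q)_\Omega + ((I-P_\lambda)(q^\dag-q^*),q^\dag-q)_\Omega.
\end{equation*}
The high-frequency term is controlled exactly as in Theorem \ref{the:vsc} by Young's inequality together with the bound $\|(I-P_\lambda)(q^\dag-q^*)\|_{0,\Omega}^2\le A^2/\lambda^\kappa$, where $A=\|q^\dag-q^*\|_{D(\A^{\kappa/2})}$. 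For the low-frequency term I would pick any $s>1$, use $\|P_\lambda(q^\dag-q^*)\|_{H_0^s(\Omega)}^2\le \lambda^{s-\kappa}A^2$ from \eqref{eq:normeq0}, and now apply the $L^2$-stability Theorem \ref{lemma:stability:l2} in place of the $H^1$-version to obtain
\begin{equation*}
|(P_\lambda(q^\dag-q^*),q^\dag-q)_\Omega|\le C\,\lambda^{(s-\kappa)/2}A\,\|u(q^\dag)-u(q)\|_{0,\Omega}^{1/2}.
\end{equation*}
Balancing the two contributions by choosing $\lambda^{(s+\kappa)/2}=A/\|u(q^\dag)-u(q)\|_{0,\Omega}^{1/2}$ produces the exponent $\kappa/(s+\kappa)$ on $\|u(q^\dag)-u(q)\|_{0,\Omega}$; letting $s\downarrow 1$ yields any $\alpha<\kappa/(1+\kappa)$.

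The main obstacle, if any, will simply be checking that halving the stability exponent (from $1$ to $1/2$) cleanly halves the resulting VSC rate in both regimes; but this is a direct substitution since the optimization in $\lambda$ is unchanged in structure. The auxiliary bound $A^{(s-\kappa)/(s+\kappa)}\le C$ used at the end of Theorem \ref{the:vsc} remains valid because $s>1\ge\kappa$, so no new analytic input is required beyond Theorem \ref{lemma:stability:l2}.
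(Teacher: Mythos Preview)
Your proposal is correct and follows exactly the approach the paper takes: the paper simply states that the proof of Theorem~\ref{the:vsc:l2} is the same as that of Theorem~\ref{the:vsc} except that Theorem~\ref{lemma:stability:l2} replaces Theorem~\ref{lemma:stability}, and your sketch spells out precisely this substitution (including the correct balancing yielding the halved exponent $\kappa/(s+\kappa)$).
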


With the aid of Theorem \ref{the:vsc:l2}, we can establish the convergence of $u(\qda)$ as follows.  Its proof is the same as in
Theorem \ref{the:vsc:l2}, and we only need to replace $\|u(\qda)-u(q^\dag)\|_{1,\Omega}$ (or $\|\nabla u(\qda)-\nabla u(q^\dag)\|_{0,\Omega}$)
with $\|u(\qda)- u(\qd)\|_{0,\Omega}$, which is valid due to \eqref{VSC:l2}.

\begin{theorem}\label{the:ellip:con:l2}
Assume $|u(\qd)|\geq c_0$ in $\Om$ and   $q^\dag-q^*\in H_0^\kappa(\Omega)$ with $\kappa>0$ and $\kappa\neq 1/2$, and $\alpha$ is the
parameter chosen as in Theorem \ref{the:vsc:l2},  then we have the following convergence rates
\beqnx
\|u(\qda)- u(\qd)\|_{0,\Omega}=O(\delta) 
\eqnx
and
\beqnx
\|\qda-\qd\|_{0,\Omega}=O(\delta^{\f{\alpha}{2}}) 
\eqnx
under the parameter choice  $\beta=\delta^{2-\alpha}$.
\end{theorem}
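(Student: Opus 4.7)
The plan is to mimic the structure of the proof of Theorem \ref{the:ellip:con}, replacing the gradient discrepancy by the $L^2$ discrepancy everywhere, and replacing the previous VSC by the sharper $L^2$-version \eqref{VSC:l2} from Theorem \ref{the:vsc:l2}. The key point is that once VSC has the $L^2$ discrepancy on its right-hand side, the same two-step argument (first bound the data discrepancy, then bound the parameter error) goes through verbatim.

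First I would start from the minimality of $\qda$ in \eqref{dai32} combined with the noise bound \eqref{eq:datal2} to obtain
\[
\tfrac{1}{2}\|u(\qda)-z^\delta\|_{0,\Om}^2+\tfrac{\beta}{2}\|\qda-q^*\|_{0,\Om}^2
\le \tfrac{1}{2}\delta^2+\tfrac{\beta}{2}\|\qd-q^*\|_{0,\Om}^2,
\]
which rearranges into $\tfrac{1}{2}\|\qda-q^*\|_{0,\Om}^2-\tfrac{1}{2}\|\qd-q^*\|_{0,\Om}^2\le \tfrac{\delta^2-\|u(\qda)-z^\delta\|_{0,\Om}^2}{2\beta}$. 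Plugging this into VSC \eqref{VSC:l2} and using $\|u(\qda)-u(\qd)\|_{0,\Om}^2\le 2\|u(\qda)-z^\delta\|_{0,\Om}^2+2\delta^2$ (so that $\|u(\qda)-z^\delta\|_{0,\Om}^2\ge \tfrac{1}{2}\|u(\qda)-u(\qd)\|_{0,\Om}^2-\delta^2$), I arrive at the analogue of \eqref{eqd},
\[
0\le \tfrac{1}{2\beta}\bigl(2\delta^2-\tfrac{1}{2}\|u(\qda)-u(\qd)\|_{0,\Om}^2\bigr)+C\|u(\qda)-u(\qd)\|_{0,\Om}^\alpha,
\]
i.e.\ the inequality $\|u(\qda)-u(\qd)\|_{0,\Om}^2\le 4\delta^2+C\beta\|u(\qda)-u(\qd)\|_{0,\Om}^\alpha$ that plays the role of \eqref{eq:key}.

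From here I would run the same dichotomy as in the proof of Theorem \ref{the:ellip:con}. If $\|u(\qda)-u(\qd)\|_{0,\Om}<\delta$, the first rate is trivially proved. Otherwise $\|u(\qda)-u(\qd)\|_{0,\Om}\ge\delta$, and since $\alpha\le 1$ (indeed $\alpha\le 1/2$ here) one has $\|u(\qda)-u(\qd)\|_{0,\Om}^\alpha\le \|u(\qda)-u(\qd)\|_{0,\Om}\,\delta^{\alpha-1}$; inserting this and choosing $\beta=\delta^{2-\alpha}$ converts the inequality into a linear one in $\|u(\qda)-u(\qd)\|_{0,\Om}$, which yields $\|u(\qda)-u(\qd)\|_{0,\Om}=O(\delta)$.

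Finally, for the parameter rate, I would return to VSC \eqref{VSC:l2} and combine it with the rearranged minimality inequality to get
\[
\tfrac{1}{4}\|\qda-\qd\|_{0,\Om}^2\le \tfrac{\delta^2}{2\beta}+C\|u(\qda)-u(\qd)\|_{0,\Om}^\alpha;
\]
with the choice $\beta=\delta^{2-\alpha}$ and the rate $O(\delta)$ just obtained for $\|u(\qda)-u(\qd)\|_{0,\Om}$, the right-hand side becomes $O(\delta^\alpha)$, establishing $\|\qda-\qd\|_{0,\Om}=O(\delta^{\alpha/2})$. The only mildly delicate step I anticipate is the replacement of the Poincar\'e inequality used in the gradient-data case by the elementary triangle inequality $\|u(\qda)-z^\delta\|^2\ge \tfrac12\|u(\qda)-u(\qd)\|^2-\delta^2$; this works because here the discrepancy is already measured in the $L^2$-norm that VSC \eqref{VSC:l2} controls, so no Sobolev embedding or extension is needed, and the rest is just book-keeping.
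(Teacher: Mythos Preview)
Your proposal is correct and follows essentially the same approach as the paper, which simply instructs one to repeat the proof of Theorem~\ref{the:ellip:con} with $\|u(\qda)-u(\qd)\|_{0,\Omega}$ in place of $\|u(\qda)-u(\qd)\|_{1,\Omega}$ (or $\|\nabla u(\qda)-\nabla u(\qd)\|_{0,\Omega}$). You also correctly identify the one genuine simplification in the $L^2$ setting: the Poincar\'e step is unnecessary because the VSC~\eqref{VSC:l2} already controls the same $L^2$ quantity that appears in the data misfit.
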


\begin{remark}
Following the arguments used in Theorem \ref{the:ellip:con} and Theorem \ref{the:ellip:con:l2},
we can actually prove a general result. Let us consider the following Tikhonov regularization
\bb
\min_{q\in K}J_{\delta,\beta}(q)=\min_{q\in K}\Big(\f{1}{2}\| u(q)-z^\delta\|_{Y}^2
+ \f \beta  2 \|q- q^*\|^2_{0,\Om}\Big),
\label{dai321}
\ee
where the noisy data $z^\delta\in Y$ satisfies
\beqnx
\|z^\delta-u(\qd) \|_Y \leq \delta.
\eqnx
If the following conditional stability estimate holds: there exists some $s_0\in (0,3/2)$ and $\alpha_0 \geq 0$ such that
\beqn\label{condi:gel}
\|q^\dag-q\|_{H^{-s_0}(\Omega)} \leq C \|u(q^\dag)-u(q)\|^{\alpha_0}_Y\quad \forall\, q\in  K.
\eqn
Then the assumption 
 $q^\dag-q^*\in H^{s_0\theta}(\Omega)$ for some $\theta\in (0,1]$ with
$\theta s_0\neq \f{1}{2}$ can imply that  the minimizer $\qda$ of \eqref{dai321}
enjoys convergence rates $\|\qda-\qd\|_{0,\Omega}=O(\delta^{\alpha/2})$  with $\alpha=\alpha_0\theta$ under a priori parameter choice
$\beta=\delta^{2-\alpha}$.  In conclusion, the space $Y$ in \eqref{condi:gel}  quantifies the regularity assumptions 
on the noisy measurable data $y^\delta$ and $\alpha_0$ is the ``maximal'' convergence rate when $q^\dag-q^*\in H^{s_0}_0(\Omega)$. 
If the regularity of $q^\dag-q^*$ is weaker, then the convergence rate is slower.
\end{remark}

\section{Convergence rates of Tikhonov regularization for
parabolic inverse radiativity problem }\label{sec:parabolic}

In this section, we shall study the inverse problem of recovering
 radiativity in the parabolic system \eqref{q1p} from a partial measure over
$\Omega\times I$, where $I$ is an open subinterval of $(0,T]$.   Throughout this section, we always assume that $a(\x)\in W^{1,\infty}(\Om)$,
$f(t,\x)\in L^2(0,T;L^2(\Om))$, $g(\x)\in L^2(0,T;H^{\frac{3}{2}}(\p\Om))\cap H^{\f 3 4}(0,T;L^2(\Omega))$
and $u_0\in H^1(\Om)$.

\subsection{Measurement data in gradient form}
Suppose that the measurement data $\nabla z^\delta$ of $\nabla u(q)$ is noisy in $\Om\times I$, with a noise level $\delta$, namely
\bb\label{eq:delta:para}
\int_0^T\|\nabla u(\qd)- \nabla z^\delta\|^2_{0,\Om}dt \leq\delta^2\,,
\ee
where  $\qd$ is the true physical radiativity. We transform the inverse problem into the following
output least-squares formulation with Tikhonov regularization:
\begin{align}
\min_{q\in K}J_{\delta,\beta}(q)=\min_{q\in K}\Big(\f{1}{2}{\int_I\int_{\Omega}  |\nabla u(q)-\nabla z^\delta|^2 d\x dt}
+ \f \beta  2 \|q- q^*\|^2_{0,\Om}\Big),
\label{min:para}
\end{align}
where $\beta>0$ is the regularization parameter, $q^*\in K$ is an a priori estimate of the true
parameter $\qd$.

\begin{theorem}\label{lemma:exisp}
 There exists at least a minimizer to optimization problem \eqref{min:para}.
\end{theorem}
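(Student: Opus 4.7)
The plan is to apply the direct method of the calculus of variations in the standard way used for Tikhonov-regularized parameter identification problems. The functional $J_{\delta,\beta}$ is bounded below by $0$, so I pick a minimizing sequence $\{q_n\}\subset K$ with $J_{\delta,\beta}(q_n)\to \inf_{q\in K}J_{\delta,\beta}(q)$. Since $K$ is convex, closed and bounded in $L^2(\Omega)$, it is weakly compact; extracting a subsequence (not relabelled), I obtain $q_n\rightharpoonup \tilde{q}$ in $L^2(\Omega)$ with $\tilde{q}\in K$. Because $K$ is additionally bounded in $L^\infty(\Omega)$, a further extraction also gives $q_n\stackrel{*}{\rightharpoonup}\tilde q$ in $L^\infty(\Omega)$, which will be convenient for the PDE passage to the limit.

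The core of the argument is continuity of the parameter-to-state map $q\mapsto u(q)$ along this subsequence. By Lemma \ref{lem:wellt}, the sequence $\{u(q_n)\}$ is bounded in $L^2(0,T;H^2(\Omega))\cap H^1(0,T;L^2(\Omega))$ uniformly in $n$, since the bound depends only on the data and the fixed bounds $\underline q,\bar q$ of $K$. By the Aubin--Lions compactness lemma, there exists a further subsequence and a limit $\tilde u\in L^2(0,T;H^2(\Omega))\cap H^1(0,T;L^2(\Omega))$ such that
\begin{align*}
u(q_n)&\to \tilde u \quad\text{strongly in }L^2(0,T;H^1(\Omega)),\\
u(q_n)&\rightharpoonup \tilde u\quad\text{weakly in }L^2(0,T;H^2(\Omega)),\\
\partial_t u(q_n)&\rightharpoonup \partial_t\tilde u \quad\text{weakly in }L^2(0,T;L^2(\Omega)).
\end{align*}

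The main obstacle is the nonlinear product term $q_n u(q_n)$ appearing in the weak formulation of \eqref{q1p}. For any test function $\varphi\in L^2(0,T;H_0^1(\Omega))$, I split
\[
\int_0^T\!\!\int_\Omega q_n u(q_n)\varphi\,d\mathbf{x}dt
=\int_0^T\!\!\int_\Omega q_n\bigl(u(q_n)-\tilde u\bigr)\varphi\,d\mathbf{x}dt
+\int_0^T\!\!\int_\Omega q_n\,\tilde u\,\varphi\,d\mathbf{x}dt.
\]
The first term tends to $0$ because $\|q_n\|_{L^\infty}$ is uniformly bounded and $u(q_n)\to \tilde u$ strongly in $L^2(0,T;L^2(\Omega))$. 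For the second term I use Fubini: $q_n\stackrel{*}{\rightharpoonup}\tilde q$ in $L^\infty(\Omega)$ tested against $\int_0^T \tilde u\,\varphi\,dt$, which belongs to $L^1(\Omega)$ (by H\"{o}lder in $t$ and Sobolev embedding of $\tilde u\in C([0,T];H^1(\Omega))$). Similarly, linear terms pass to the limit by weak convergence. Thus $\tilde u$ satisfies the weak formulation of \eqref{q1p} with coefficient $\tilde q$ and the same initial and boundary data, and by uniqueness in Lemma \ref{lem:wellt} we conclude $\tilde u=u(\tilde q)$.

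Finally, I assemble lower semicontinuity. Since $\nabla u(q_n)\to \nabla u(\tilde q)$ strongly in $L^2(I\times\Omega)$ by the compactness step above, the discrepancy term converges:
\[
\int_I\!\!\int_\Omega |\nabla u(q_n)-\nabla z^\delta|^2 d\mathbf{x}dt
\;\longrightarrow\;
\int_I\!\!\int_\Omega |\nabla u(\tilde q)-\nabla z^\delta|^2 d\mathbf{x}dt.
\]
The regularization term $\|q-q^*\|_{0,\Omega}^2$ is weakly lower semicontinuous on $L^2(\Omega)$, so
\[
\|\tilde q-q^*\|_{0,\Omega}^2\le\liminf_{n\to\infty}\|q_n-q^*\|_{0,\Omega}^2.
\]
Combining these gives $J_{\delta,\beta}(\tilde q)\le \liminf_n J_{\delta,\beta}(q_n)=\inf_{q\in K}J_{\delta,\beta}(q)$, so $\tilde q\in K$ is a minimizer.
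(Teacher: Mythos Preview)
Your proposal is correct and follows exactly the standard direct-method argument the paper has in mind: the paper omits the proof entirely and simply refers to Theorem~2.1 in \cite{keung98}, whose proof proceeds via a minimizing sequence, weak compactness of $K$, Aubin--Lions compactness for $u(q_n)$, passage to the limit in the weak formulation, and weak lower semicontinuity of the regularization term. Your write-up fills in precisely those details.
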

\begin{proof} We will omit the proof, which is not the focus of the paper and quite similar to the one of Theorem 2.1 in \cite{keung98}.
\end{proof}

We are now establishing the following Lipschitz type stability estimate for the parabolic inverse radiativity problem.

\begin{lemma}\label{lemma:instab:para}
Assume $|u(\qd)|\geq \bar c_0$ for some positive constant $\bar c_0$ in $I\times\Om$.
\begin{enumerate}

\item[\textup{(a)}] If the space dimension $d=2$, then, for any $\epsilon\in (0,1/2)$,
\begin{equation}\label{est:para2}
\|q-\qd\|_{H^{-1-\epsilon}(\Omega)}\leq C  \|u(q)-u(\qd)\|_{L^2(I;H_0^1(\Omega))}\quad \forall\, q\in K.
\end{equation}

\item[\textup{(b)}] If the space dimension $d=3$ and
\begin{equation}\label{ass:d3}
\p_t u(q^\dag)\in L^2(I;L^3(\Omega)),
\end{equation}
then \eqref{est:para2} still holds for any $\epsilon\in (0,1/2)$.

\end{enumerate}

\begin{proof}

It is easy to see from \eqref{q1p} that
$$
\p_tu(q)-\p_tu(\qd)-\nabla\cdot(a\nabla\cdot(u(q)-u(\qd)))+q(u(q)-u(\qd))=u(\qd)(\qd-q) ~~{\rm in}~~I\times\Om.
$$
 By choosing an arbitrary $\phi\in H_0^1(I,L^2(\Omega))\cap L^2(I;H_0^1(\Omega))$ and multiplying both-hands sides with it and integration over
$I\times \Omega$, we have
\begin{align*}
\left|\int_I \int_\Omega (\qd-q)u(\qd)\phi d\x dt\right|
\leq&
\left|\int_I\int_\Omega (u(q)-u(\qd))\p_t\phi d\x dt\right|
+\left|\int_I\int_\Omega a\nabla (u(q)-u(\qd)\cdot \nabla \phi d\x dt \right|\\
&+\left|\int_I\int_\Omega q (u(q)-u(\qd) \phi d\x dt \right|,
\end{align*}
which yields
\begin{align}\label{ineq:ll}
\left|\int_I \int_\Omega (\qd-q)u(\qd)\phi dx dt\right|
\leq (1+\|a\|_{L^\infty(\Omega)}+\|q\|_{L^\infty(\Omega)})\|u(q)-u(\qd)\|_{L^2(I;H_0^1(\Omega))} \notag\\
\times (\|\p_t\phi\|_{L^2(I,L^2(\Omega))}+\|\phi\|_{L^2(I;H_0^1(\Omega))}).
\end{align}
In particular, let us fix some $\varphi\in C_c^\infty(I)$ such that $\int_I \varphi(t) dt=1$ and set
\beqn\label{def:phi}
\phi_h:=hG\,\,\textup{with}\,\,  G:=\f{ \varphi}{u(\qd)}
\eqn
for $h\in H_0^{1+\epsilon}(\Omega)$.   From the hypothesis  $|u(\qd)|\geq \bar c_0$ and the fact that $u(\qd)\in L^2(I;H^2(\Omega))$ (by Lemma \ref{lem:wellt}), 
we  can infer by Leibniz's rule that
\begin{equation}\label{eq:G}
 G\in
L^2(I;H^2(\Omega))\cap H^1_0(I;L^2(\Omega)).
\end{equation}
Using Remark \ref{remark:product}  and \eqref{eq:G} , we have
\begin{align}\label{ineq:llI}
\|\phi_h\|_{L^2(I,H_0^1(\Omega))}^2
= &\int_I \|hG(t)\|_{H_0^1(\Omega)}^2 dt\leq C  \int_I \|h\|^2_{1+\epsilon,\Omega }\|G\|_{2,\Omega}^2 dt\notag\\
= &\|h\|^2_{1+\epsilon,\Omega }\|G\|^2_{L^2(I;H^2(\Omega))}.
\end{align}

For the space dimension $d=2$, we know that $h\in L^\infty(\Omega)$ and $\|h\|_{L^\infty(\Omega)}\leq C \|h\|_{1+\epsilon,\Omega}$ for all $h\in H_0^{1+\epsilon}(\Omega)$ by Sobolev embedding theorem. Therefore,  we have
\begin{align}\label{ineq:lV}
\|\p_t \phi_h\|_{L^2(I,L^2(\Omega))}^2
=\int_I\|h \p_t G \|_{L^2(\Omega)}^2 dt
\leq  \|h\|_{L^\infty(\Omega)}^2\int_I\|\p_t G \|_{L^2(\Omega)}^2 dt\leq C \|h\|_{1+\epsilon,\Omega}^2.
\end{align}

For the space dimension $d=3$,  from the hypothesis $|u(\qd)|\geq \bar c_0$ and \eqref{ass:d3}, we get 
$\p_t G\in L^2(I;L^3(\Omega))$. Therefore, 
\begin{align}\label{ineq:lV2}
\|\p_t \phi_h\|_{L^2(I,L^2(\Omega))}^2
=\int_I\|h \p_t G \|_{L^2(\Omega)}^2 dt
\leq  \|h\|_{L^6(\Omega)}^2\int_I\|\p_t G \|_{L^3(\Omega)}^2 dt\leq C \|h\|_{1+\epsilon,\Omega}^2,
\end{align}
where we have used the Sobolev embedding result $H^1(\Omega)\Subset L^6(\Omega)$.   Taking $\phi=\phi_h$ in \eqref{ineq:ll}, and using estimates
\eqref{ineq:llI}-\eqref{ineq:lV2},   we can conclude that for all $q\in K$,
\begin{align}\label{ineq:V}
\left| \int_\Omega (\qd-q)h dx \right|
\leq C\|u(q)-u(\qd)\|_{L^2(I;H_0^1(\Omega))}\|h\|_{1+\epsilon,\Omega} \quad \forall\, h\in H_0^{1+\epsilon}(\Omega),
\end{align}
which implies \eqref{est:para2}.


\end{proof}

\end{lemma}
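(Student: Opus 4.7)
The plan is to adapt the elliptic strategy of Theorem \ref{lemma:stability} to the parabolic setting by designing a tensor-product test function that separates time and space, thereby reducing the stability estimate to a spatial duality computation while absorbing the time derivative into one factor.

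First I would write down the equation satisfied by $w := u(q) - u(\qd)$, namely
\[
\p_t w - \nabla\cdot(a\nabla w) + qw = u(\qd)(\qd - q) \quad \text{in } I \times \Omega,
\]
with $w = 0$ on $\p\Omega \times I$ and $w(\cdot, 0) = 0$. Testing against $\phi \in H_0^1(I; L^2(\Omega)) \cap L^2(I; H_0^1(\Omega))$ and integrating by parts in time (the boundary contributions vanish since $\phi$ is compactly supported in $I$) and in space yields the bound
\[
\Bigl|\int_I\!\!\int_\Omega (\qd - q)\,u(\qd)\,\phi\,d\x\,dt\Bigr| \leq C \|w\|_{L^2(I;H_0^1(\Omega))}\bigl(\|\p_t\phi\|_{L^2(I;L^2(\Omega))} + \|\phi\|_{L^2(I;H_0^1(\Omega))}\bigr).
\]

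Next, to isolate $q - \qd$ on the left against an arbitrary spatial test function, I would take the tensor-product test function
\[
\phi_h(\x,t) := h(\x)\,G(\x,t), \qquad G(\x,t) := \frac{\varphi(t)}{u(\qd)(\x,t)},
\]
where $h \in H_0^{1+\epsilon}(\Omega)$ and $\varphi \in C_c^\infty(I)$ is a fixed bump with $\int_I \varphi\,dt = 1$. The choice is made so that $u(\qd)\,\phi_h = h\varphi$, and hence the left side collapses to $\int_\Omega (\qd - q)h\,d\x$, which by duality yields $\|q-\qd\|_{H^{-1-\epsilon}(\Omega)}$ after taking a supremum over $\|h\|_{H^{1+\epsilon}_0(\Omega)} = 1$.

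The remaining work is to bound the two norms of $\phi_h$. Using $|u(\qd)| \geq \bar c_0$, Leibniz's rule, and Lemma \ref{lem:wellt}, $G$ belongs to $L^2(I; H^2(\Omega)) \cap H_0^1(I; L^2(\Omega))$. For the spatial norm, Remark \ref{remark:product} gives
\[
\|h\,G(\cdot,t)\|_{H^1_0(\Omega)} \leq C\|h\|_{1+\epsilon,\Omega}\|G(\cdot,t)\|_{3/2,\Omega} \leq C\|h\|_{1+\epsilon,\Omega}\|G(\cdot,t)\|_{2,\Omega},
\]
and integration in $t$ controls $\|\phi_h\|_{L^2(I;H_0^1(\Omega))}$. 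The time-derivative term $\|h\,\p_t G\|_{L^2(I;L^2(\Omega))}$ is the part where the dimensional split enters: in the case $d = 2$, the Sobolev embedding $H^{1+\epsilon}(\Omega) \hookrightarrow L^\infty(\Omega)$ (valid for any $\epsilon > 0$) yields $\|h\,\p_t G\|_{L^2} \leq \|h\|_{L^\infty}\|\p_t G\|_{L^2} \lesssim \|h\|_{1+\epsilon,\Omega}$ using $\p_t G \in L^2(I;L^2(\Omega))$; in the case $d = 3$, $H^{1+\epsilon}$ no longer embeds into $L^\infty$ for $\epsilon < 1/2$, so I would instead use Hölder $\|h\,\p_t G\|_{L^2_\x} \leq \|h\|_{L^6(\Omega)}\|\p_t G\|_{L^3(\Omega)}$ together with $H^1(\Omega) \hookrightarrow L^6(\Omega)$, which is exactly where the extra hypothesis $\p_t u(\qd) \in L^2(I; L^3(\Omega))$ is needed to guarantee $\p_t G \in L^2(I; L^3(\Omega))$.

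The main obstacle is precisely this three-dimensional case: the absence of an $L^\infty$ embedding forces the Hölder split $L^6 \times L^3$, which is the sharp one available without strengthening the regularity of $h$, so the hypothesis \eqref{ass:d3} is exactly what closes the argument. Combining all estimates and taking the supremum over unit-norm $h \in H_0^{1+\epsilon}(\Omega)$ produces \eqref{est:para2} in both dimensions.
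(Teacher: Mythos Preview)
Your proposal is correct and follows essentially the same approach as the paper: the same tensor-product test function $\phi_h = hG$ with $G = \varphi/u(\qd)$, the same use of Remark \ref{remark:product} for the spatial $H^1$ bound, and the same dimensional split ($H^{1+\epsilon}\hookrightarrow L^\infty$ for $d=2$, H\"older $L^6\times L^3$ with $H^1\hookrightarrow L^6$ for $d=3$) to handle $\|h\,\p_tG\|_{L^2}$.
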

\begin{remark}
The assumption  \eqref{ass:d3} holds provided that the source terms $f,g$  and initial value $u_0$ are smooth enough. In particular, if
$f\mid_{I\times \Omega} \in L^3(I\times \Omega)$ and
$g\mid_{I\times \Gamma}\in W^{\f{5}{3},\f 5 6}_3(I\times \p\Omega)$ and $u_0\in W^{4/3}_3(\Omega)$, then $\p_t u(q^\dag)\in L^3(I\times \Omega)$ 
(see \cite[Chapter VI]{Ladyzenskaja})  
and hence \eqref{ass:d3}  is true.  For the definition of Sobolev-Slobodeckij-type spaces $W^{\f{5}{3},\f 5 6}_3(I\times \p\Omega)$ 
and $W^{4/3}_3(\Omega)$, we can refer to \cite{Ladyzenskaja}.

\end{remark}

~

Next we introduce the following VSC: for any $q\in K$,
	\beqn\label{VSC:para}
	\f{1}{4}\|q-q^\dag\|^2_{0,\Omega}
	\le \f{1}{2}\|q-q^*\|_{0,\Omega}^2-\f{1}{2}
	\|q^\dag-q^*\|_{0,\Omega}^2+C\|u(q)-u(q^\dag)\|_{L^2(I;H_0^1(\Omega))}^\alpha,
	\eqn
and its equivalent form
	\beqn\label{innerp}
( q^\dag-q^*,q^\dag-q)_{\Omega}
\le \f{1}{4}\|q-q^\dag\|^2_{0,\Omega}
+C\|u(q)-u(q^\dag)\|_{L^2(I;H_0^1(\Omega))}^\alpha
\eqn
with some parameter $\alpha$ such that
	$$
	\begin{cases}
	\alpha=1 \quad &\text{if}\,\, \kappa>1\\
	\alpha<\f{2\kappa}{1+2\kappa}\,\textup{but it can be choosen arbitrarily close to}\,\,
\f{2\kappa}{1+\kappa}  \quad &\text{if}\,\, \kappa\in(0,\frac{1}{2})\cup (\frac{1}{2},1).\\
	\end{cases}
	$$

\begin{theorem}\label{the:vsc:para}
  Assume $|u(\qd)|\geq \bar c_0$ and  $q^\dag-q^*\in H_0^\kappa(\Omega)$ with $\kappa>0$ and $\kappa\neq 1/2$, 
  and \eqref{ass:d3} holds when the space dimension $d=3$,
then the  VSC \eqref{innerp}
 holds.

\end{theorem}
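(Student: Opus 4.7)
The plan is to carry the proof of Theorem \ref{the:vsc} over verbatim, with the conditional stability Theorem \ref{lemma:stability} replaced by its parabolic analogue Lemma \ref{lemma:instab:para}. The point is that both stability estimates have exactly the same left-hand side $\|q-\qd\|_{H^{-1-\epsilon}(\Omega)}$ (with $\epsilon \in (0,1/2)$); only the right-hand side norm changes from $\|u(q)-u(\qd)\|_{1,\Omega}$ to $\|u(q)-u(\qd)\|_{L^2(I;H_0^1(\Omega))}$. Consequently every inequality used in Theorem \ref{the:vsc} continues to hold with this new data norm, and the spectral cut-off argument with the Dirichlet Laplacian $\mathcal{A}=-\Delta$ goes through unchanged.

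Concretely, I would first dispose of the trivial case $\qd-q^*=0$, and then treat $\kappa>1$: by duality,
\begin{equation*}
|(\qd-q^*,\qd-q)_\Omega| \le \|\qd-q^*\|_{H_0^\kappa(\Omega)}\,\|\qd-q\|_{H^{-\kappa}(\Omega)},
\end{equation*}
and since $H^{-1-\epsilon}(\Omega)\hookrightarrow H^{-\kappa}(\Omega)$ for any $\kappa>1$ and a small $\epsilon\in(0,1/2)$, Lemma \ref{lemma:instab:para} gives the bound by $C\|u(q)-u(\qd)\|_{L^2(I;H_0^1(\Omega))}$, verifying \eqref{innerp} with $\alpha=1$.

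For the delicate range $\kappa\in(0,1/2)\cup(1/2,1]$, I would reproduce the spectral splitting of Theorem \ref{the:vsc}. Let $\{e_n,\lambda_n\}$ be the eigensystem of $\mathcal{A}$ and define the orthogonal projection $P_\lambda$ onto $\mathrm{span}\{e_n:\lambda_n<\lambda\}$. Write
\begin{equation*}
(\qd-q^*,\qd-q)_\Omega = (P_\lambda(\qd-q^*),\qd-q)_\Omega + ((I-P_\lambda)(\qd-q^*),\qd-q)_\Omega.
\end{equation*}
Young's inequality applied to the second term, together with the eigenfunction tail bound
\begin{equation*}
\|(I-P_\lambda)(\qd-q^*)\|_{0,\Omega}^2 \le \lambda^{-\kappa}\|\qd-q^*\|_{D(\mathcal{A}^{\kappa/2})}^2,
\end{equation*}
yields the $\tfrac14\|\qd-q\|_{0,\Omega}^2$ absorption term plus a $C\lambda^{-\kappa}$ remainder. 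For the first term, pick $s>1$ close to $1$ (with $\kappa<s$) and use duality,
\begin{equation*}
|(P_\lambda(\qd-q^*),\qd-q)_\Omega| \le \|P_\lambda(\qd-q^*)\|_{H^s_0(\Omega)}\,\|\qd-q\|_{H^{-s}(\Omega)},
\end{equation*}
bounding the first factor by $\lambda^{(s-\kappa)/2}\|\qd-q^*\|_{D(\mathcal{A}^{\kappa/2})}$ via \eqref{eq:normeq0} and controlling the second factor with Lemma \ref{lemma:instab:para}. Balancing $\lambda^{-\kappa}\sim\lambda^{(s-\kappa)/2}\|u(\qd)-u(q)\|_{L^2(I;H_0^1(\Omega))}$ by choosing $\lambda^{(s+\kappa)/2}=\|\qd-q^*\|_{D(\mathcal{A}^{\kappa/2})}/\|u(\qd)-u(q)\|_{L^2(I;H_0^1(\Omega))}$ produces the exponent $2\kappa/(s+\kappa)$, which can be chosen arbitrarily close to $2\kappa/(1+\kappa)$ by taking $s$ close to $1$.

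There is no new technical obstacle beyond what was already resolved in the stationary case: Lemma \ref{lemma:instab:para} is the only ingredient that had to be redone for the parabolic setting, and the hypothesis \eqref{ass:d3} is invoked precisely to legitimise that lemma in dimension three. All remaining steps — the duality between $H_0^s$ and $H^{-s}$, the spectral representation of $\mathcal{A}^{\kappa/2}$, the embeddings $H^{-1-\epsilon}(\Omega)\hookrightarrow H^{-s}(\Omega)$ for $s>1+\epsilon$, and the parallelogram identity that makes \eqref{VSC:para} and \eqref{innerp} equivalent — are identical to those in the elliptic argument. The only expositional care required is to make sure the intermediate $s$ is taken so that $H^{-1-\epsilon}\hookrightarrow H^{-s}$, which is automatic since $s$ may be fixed strictly greater than $1$ while $\epsilon\in(0,1/2)$.
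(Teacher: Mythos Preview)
Your proposal is correct and follows essentially the same approach as the paper's own proof: the paper too declares that the argument is a verbatim transplant of Theorem \ref{the:vsc}, replacing Theorem \ref{lemma:stability} by Lemma \ref{lemma:instab:para}, and carries out the identical spectral cut-off and $\lambda$-balancing computation. Your exposition is in fact slightly more careful about the choice of $s$ and $\epsilon$ than the paper's sketch.
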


\begin{proof} As the proof is quite similar to the one of Theorem \ref{the:vsc}, we shall  confine ourselves with a sketch of proof.

If  $\qd-q^*=0 $, then we are done.  For the case when $\qd-q^*\neq 0 $ and $\kappa>1$,
we use Lemma \ref{lemma:instab:para} to ensure
 $$
 |(\qd-q^*,\qd-q)_{\Omega}|\leq \|\qd-q^*\|_{H_0^\kappa(\Omega)}\|\qd-q\|_{H^{-\kappa}(\Omega)}
 \leq C\|\qd-q^*\|_{H_0^\kappa(\Omega)}\|u(\qd)-u(q)\|_{L^2(I;H_0^1(\Omega))}.
 $$

Next, for the case when $\qd-q^*\neq 0 $ and $\kappa\in (0,\frac{1}{2})\cup (\frac{1}{2},1)$,
by constructing the same projections $\{P_\lambda\}_{\lambda>0}$ as in the proof of Theorem \ref{the:vsc}
and using the same reasoning leading to  \eqref{eq:vsc:00} and \eqref{eq:vsc:0}, we can prove that for any $\lambda>0$,
\begin{equation}\label{eq:vsc:para:01}
 |((I-P_\lambda)(\qd-q^*),\qd-q)_{\Omega}|
\leq\f{\|\qd-q\|^2}{4}+C
\f{\|\qd-q^*)\|_{D(A^{\kappa/2})}^2}{\lambda^{\kappa}},
\end{equation} 	
and any $s>1$
\beqn\label{eq:vsc:para:02}
 |(P_\lambda(\qd-q^*),\qd-q)_{\Omega}|\leq C  \lambda^{\f{s-\kappa}{2}}\|\qd-q^*\|_{D(\A^{\kappa/2})}\|u(\qd)-u(q)\|_{L^2(I;H_0^1(\Omega))}.
\eqn	
Then, the combination of \eqref{eq:vsc:para:01} and
\eqref{eq:vsc:para:02} yields
\begin{align}\label{vsc:para:final}
(\qd-q^*,\qd-q)_{\Omega}\leq& \f{\|\qd-q\|^2}{4}\\
&+CA\inf_{\lambda>0}
\left(\f{A}{\lambda^{ {\kappa}}}+\lambda^{\f{s-\kappa}{2}}\|u(\qd)-u(q)\|_{L^2(I;H_0^1(\Omega))}\right)\notag
\end{align}
with $A=\|\qd-q^*\|_{D(\A^{\kappa/2})}$, which completes the proof by balancing the parameter $\lambda$ as
in the proof of Theorem \ref{the:vsc}.
\end{proof}
With the aid of the proposed VSC \eqref{VSC:para}, we are able to prove the following convergence results,
whose proof follows the same manner of Theorem \ref{the:vsc}.

\begin{theorem}\label{the:para:con}
  Assume $|u(\qd)|\geq \bar c_0$ and  $q^\dag-q^*\in H_0^\kappa(\Omega)$ with $\kappa>0$ and $\kappa\neq 1/2$,  and \eqref{ass:d3} holds when the space dimension $d=3$.
Let  $\alpha$ be the
parameter chosen as in Theorem \ref{the:vsc:para}. Then we have the following convergence rates results,
\beqn\label{eq:convergence:para0}
\|\nabla u(\qda)-\nabla u(q^\dag)\|_{L^2(I;L^2(\Om))}=O(\delta) \quad (\delta\to 0)
\eqn
and
\beqn\label{eq:convergence:para1}
\|\qda-\qd\|_{0,\Omega}=O(\delta^{\f{\alpha}{2}}) \quad (\delta\to 0).
\eqn
under the parameter choice  $\beta=\delta^{2-\alpha}$.
\end{theorem}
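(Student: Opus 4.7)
The plan is to mirror the argument of Theorem \ref{the:ellip:con} essentially verbatim, with the spatial $H^1_0$ norm replaced by the Bochner norm $L^2(I;H_0^1(\Omega))$. First, I would exploit the defining minimality of $\qda$ in \eqref{min:para} by comparing $J_{\delta,\beta}(\qda)$ with $J_{\delta,\beta}(\qd)$ and invoking the data bound \eqref{eq:delta:para}. This yields
\[
\f{1}{2}\int_I\|\nabla u(\qda)-\nabla z^\delta\|_{0,\Om}^2\,dt+\f{\beta}{2}\|\qda-q^*\|_{0,\Omega}^2\le \f{\delta^2}{2}+\f{\beta}{2}\|\qd-q^*\|_{0,\Omega}^2,
\]
so that, after rearranging,
\[
\f{1}{2}\|\qda-q^*\|_{0,\Omega}^2-\f{1}{2}\|\qd-q^*\|_{0,\Omega}^2\le \f{\delta^2}{2\beta}-\f{1}{2\beta}\int_I\|\nabla u(\qda)-\nabla z^\delta\|_{0,\Om}^2\,dt.
\]

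Next I would plug this into the VSC \eqref{VSC:para} established in Theorem \ref{the:vsc:para}. Using the elementary reverse triangle inequality $\|a\|^2\ge \f{1}{2}\|a-b\|^2-\|b\|^2$ applied with $a=\nabla u(\qda)-\nabla z^\delta$ and $b=\nabla u(\qd)-\nabla z^\delta$, together with \eqref{eq:delta:para}, I obtain
\[
\int_I\|\nabla u(\qda)-\nabla z^\delta\|_{0,\Om}^2\,dt\ge \f{1}{2}\|\nabla u(\qda)-\nabla u(\qd)\|_{L^2(I;L^2(\Om))}^2-\delta^2.
\]
Combining this with the VSC and discarding the nonnegative term $\tfrac{1}{4}\|\qda-\qd\|_{0,\Omega}^2$ gives
\[
0\le \f{1}{2\beta}\Bigl(2\delta^2-\f{1}{2}\|\nabla u(\qda)-\nabla u(\qd)\|_{L^2(I;L^2(\Om))}^2\Bigr)+C\|u(\qda)-u(\qd)\|_{L^2(I;H_0^1(\Om))}^\alpha.
\]
Poincar\'e's inequality applied slicewise (since $u(\qda)-u(\qd)\in L^2(I;H_0^1(\Om))$) lets me absorb the full $H_0^1$-norm into the gradient norm, yielding the master inequality
\[
\|\nabla u(\qda)-\nabla u(\qd)\|_{L^2(I;L^2(\Om))}^2\le 4\delta^2+C\beta\,\|\nabla u(\qda)-\nabla u(\qd)\|_{L^2(I;L^2(\Om))}^\alpha.
\]

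From here the argument is purely algebraic and follows the dichotomy used in Theorem \ref{the:ellip:con}. If $\|\nabla u(\qda)-\nabla u(\qd)\|_{L^2(I;L^2(\Om))}<\delta$, then \eqref{eq:convergence:para0} holds trivially; otherwise, since $\alpha\le 1$, I bound $\|\nabla u(\qda)-\nabla u(\qd)\|^\alpha\le \|\nabla u(\qda)-\nabla u(\qd)\|\,\delta^{\alpha-1}$, substitute into the master inequality, and set $\beta=\delta^{2-\alpha}$ to conclude $\|\nabla u(\qda)-\nabla u(\qd)\|_{L^2(I;L^2(\Om))}=O(\delta)$. Finally, plugging \eqref{eq:convergence:para0} back into the VSC and using Poincar\'e's inequality together with $\delta^2/\beta=\delta^\alpha$ gives
\[
\f{1}{4}\|\qda-\qd\|_{0,\Omega}^2\le \f{\delta^2}{2\beta}+C\|\nabla u(\qda)-\nabla u(\qd)\|_{L^2(I;L^2(\Om))}^\alpha= O(\delta^\alpha),
\]
which is exactly \eqref{eq:convergence:para1}.

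I do not anticipate a genuine obstacle here: the Lipschitz-type conditional stability (Lemma \ref{lemma:instab:para}) and the VSC (Theorem \ref{the:vsc:para}) have already done the heavy lifting, and this final step is a routine transfer from the data discrepancy to the reconstruction error. The only point that requires mild care is the reverse triangle step producing the factor $1/2$, and the case split $\|\nabla u(\qda)-\nabla u(\qd)\|_{L^2(I;L^2(\Om))}\lessgtr \delta$ needed to handle the sublinear power $\alpha$ on the right-hand side of the master inequality when $\kappa\le 1$.
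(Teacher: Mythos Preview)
Your proposal is correct and follows essentially the same argument as the paper's own proof: minimality of $\qda$ combined with the data bound, then the VSC \eqref{VSC:para}, the triangle/reverse-triangle step producing the factor $\tfrac12$, Poincar\'e's inequality, the dichotomy on $\|\nabla u(\qda)-\nabla u(\qd)\|_{L^2(I;L^2(\Om))}\lessgtr\delta$, and finally re-inserting into the VSC with $\beta=\delta^{2-\alpha}$. The only cosmetic difference is that you name the ``reverse triangle'' step explicitly, whereas the paper simply calls it a triangle inequality.
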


\begin{proof}
Let $q^\delta_\beta$ be the minimizer of \eqref{min:para}, we have
\begin{align}\label{pqc}
\f{1}{2}\int_I\|\nabla u(\qda)-\nabla z^\delta\|_{0,\Om}^2dt+ \f \beta 2
\|\qda -q^*\|_{0,\Omega}^2\le& \f{1}{2}\int_I\|\nabla u(\qd)-\nabla z^\delta\|_{0,\Om}^2dt+\f  \beta 2
\|q^\dag-q^*\|_{0,\Omega}^2\notag\\
\le& \f 1 2 \delta^2+ \f \beta 2
\|q^\dag-q^*\|_{0,\Omega}^2,
\end{align}
which implies
\begin{align}\label{pqb}
\f{1}{2}
\|\qda-q^*\|^2_{0,\Omega}-
\f{1}{2}\|q^\dag-q^*\|_{0,\Omega}^2\le& \f{\delta^2}{2\beta}-
\f{1}{2\beta}  \int_I\|\nabla u(\qda)-\nabla z^\delta\|_{0,\Om}^2dt
\leq  \f{\delta^2}{2\beta}.
\end{align}
Then we get from  \eqref{VSC:para}, \eqref{pqb} and triangle inequality that
\begin{align}\label{pqd1}
0\le& \f{1}{2}\|\qda-q^*\|_{0,\Omega}^2-\f{1}{2}
\|\qd-q^*\|_{0,\Omega}^2+C\|u(\qda)-u(q^\dag)\|_{L^2(I;H^1_0(\Om))}^\alpha  \notag \\
\le& \f{1}{2\beta}\left(\delta^2-\int_I\|\nabla u(\qda)-\nabla z^\delta\|_{0,\Om}^2dt\right)+C\|u(\qda)-u(q^\dag)\|_{L^2(I;H^1_0(\Om))}^\alpha \nb\\
\le& \f{1}{2\beta}\left(2\delta^2-\frac{1}{2}\int_I\|\nabla u(\qda)-\nabla u(\qd)\|_{0,\Om}^2dt\right)+C\|u(\qda)-u(q^\dag)\|_{L^2(I;H^1_0(\Om))}^\alpha .
\end{align}
As $u(\qda)-u(\qd)\in L^2(0,T;H^1_0(\Om))$, then by the Poinc$\acute{a}$re's inequality, we have
\beqnx
\|u(\qda)-u(q^\dag)\|_{L^2(I;H^1_0(\Om))} \leq
C\|\nabla u(\qda)-\nabla u(q^\dag)\|_{L^2(I;L^2(\Om))},
\eqnx
which together with \eqref{pqd1} yields that
\beqn\label{eq:pkey}
\int_I\|\nabla u(\qda)-\nabla u(\qd)\|_{0,\Omega}^2dt
\le 4\delta^2  +C\beta \|\nabla u(\qda)-\nabla u(q^\dag)\|^\alpha_{L^2(I;L^2(\Om))}.
\eqn

Hence,  if $ \|\nabla u(\qda)-\nabla u(q^\dag)\|_{L^2(I;L^2(\Om))}<\delta$, then  the convergence
rate \eqref{eq:convergence:para0} holds. Otherwise, if
 $ \|\nabla u(\qda)-\nabla u(q^\dag)\|_{L^2(I;L^2(\Om))}\geq \delta$, as $\alpha\leq 1$, then one has
 $$
  \|\nabla u(\qda)-\nabla u(q^\dag)\|_{L^2(I;L^2_0(\Om))}^\alpha
  \leq   \|\nabla u(\qda)-\nabla u(q^\dag)\|_{L^2(I;L^2(\Om))} \delta^{\alpha-1}.
 $$
Taking the above inequality into \eqref{eq:pkey} and choosing $\beta=\delta^{2-\alpha}$, we get
\beqnx
\|\nabla u(\qda)-\nabla u(q^\dag)\|_{L^2(I;L^2(\Om))}^2
&\le& 4\delta^2  +C\delta^{2-\alpha}\|\nabla u(\qda)-\nabla u(q^\dag)\|_{L^2(I;L^2(\Om))}\delta^{\alpha-1}\\
&=&4\delta^2  +C\delta\|\nabla u(\qda)-\nabla u(\qd)\|_{L^2(I;L^2(\Om))}\\
&\leq&C\delta\|\nabla u(\qda)-\nabla u(\qd)\|_{L^2(I;L^2(\Om))},
\eqnx
which implies $\|\nabla u(\qda)-\nabla u(\qd)\|_{L^2(I;L^2(\Om))}=O(\delta)$. Therefore,
  \eqref{eq:convergence:para0} holds.

Further, using  \eqref{VSC:para}, \eqref{pqb} and Poinc$\acute{a}$re's inequality, we obtain
\beqnx
\f{1}{4}\|\qda-q^\dag\|^2_{0,\Omega}
&\le& \f{1}{2}\|\qda-q^*\|_{0,\Omega}^2-\f{1}{2}
\|q^\dag-q^*\|_{0,\Omega}^2+C\| u(\qda)- u(q^\dag)\|^\alpha_{L^2(I;H^1_0(\Om))}\\
&\le& \frac{\delta^2}{2\beta}+C\|\nabla u(\qda)-\nabla u(q^\dag)\|^\alpha_{L^2(I;L^2(\Om))}.
\eqnx
Then choosing $\beta=\delta^{2-\alpha}$ and using \eqref{eq:convergence:para0}, we have
\beqnx
\f{1}{4}\|\qda-q^\dag\|^2_{0,\Omega}
\le \frac{\delta^2}{2\delta^{2-\alpha}}+C\delta^\alpha\leq C\delta^\alpha,
\eqnx
which verifies \eqref{eq:convergence:para1}.

\end{proof}

 Similar to Corollary \ref{coro:ellip}, we can obtain the following result.

\begin{corollary}
Under the hypothesises and settings of Theorem \ref{the:para:con} and assume that $2\leq p<+\infty$, we then have the convergence rate
\beqn\label{eq:convergence12}
\|\qda-\qd\|_{L^p(\Omega)}=O(\delta^{\f{\alpha}{p}}) 
\eqn
under the parameter choice  $\beta=\delta^{2-\alpha}$.

\end{corollary}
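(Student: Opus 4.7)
The plan is to follow the exact same strategy used in Corollary \ref{coro:ellip}, simply substituting the parabolic convergence result of Theorem \ref{the:para:con} for the elliptic one of Theorem \ref{the:ellip:con}. The key observation is that the admissible set $K$ imposes pointwise bounds $0 < \underline{q} \leq q \leq \bar q$, so every element of $K$ lies in $L^\infty(\Omega)$ with norm at most $\bar q$. Consequently, for the minimizer $\qda$ and the true radiativity $\qd$, both elements of $K$, one has the uniform bound
\begin{equation*}
\|\qda - \qd\|_{L^\infty(\Omega)} \leq 2\bar q.
\end{equation*}

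Next, I would interpolate between $L^2$ and $L^\infty$ via Hölder's inequality: for any $2 \leq p < +\infty$,
\begin{equation*}
\|\qda - \qd\|_{L^p(\Omega)}^p = \int_\Omega |\qda - \qd|^{p-2} |\qda - \qd|^2 \, dx \leq \|\qda - \qd\|_{L^\infty(\Omega)}^{p-2} \|\qda - \qd\|_{L^2(\Omega)}^2,
\end{equation*}
which yields
\begin{equation*}
\|\qda - \qd\|_{L^p(\Omega)} \leq (2\bar q)^{(p-2)/p} \|\qda - \qd\|_{L^2(\Omega)}^{2/p}.
\end{equation*}

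Finally, I would invoke Theorem \ref{the:para:con} under the parameter choice $\beta = \delta^{2-\alpha}$, which provides $\|\qda - \qd\|_{0,\Omega} = O(\delta^{\alpha/2})$. Raising this to the power $2/p$ gives $\|\qda - \qd\|_{L^2(\Omega)}^{2/p} = O(\delta^{\alpha/p})$, and combining with the interpolation bound above delivers the desired rate $\|\qda - \qd\|_{L^p(\Omega)} = O(\delta^{\alpha/p})$. There is essentially no obstacle here; this is a routine Hölder interpolation argument, and the entire technical content has already been absorbed into Theorem \ref{the:para:con}. The only mild subtlety worth flagging is that the result is vacuous for $p = \infty$ (since one only gets the uniform bound $2\bar q$), which is precisely why the corollary restricts to $2 \leq p < +\infty$.
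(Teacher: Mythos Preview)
Your proposal is correct and matches the paper's approach exactly: the paper simply remarks that the result follows as in Corollary \ref{coro:ellip}, which is precisely the H\"older interpolation between the uniform $L^\infty$ bound $2\bar q$ coming from $K$ and the $L^2$ rate from Theorem \ref{the:para:con}.
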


\subsection{Measurement data in $L^2$-norm}

In this subsection, we assume that the noisy data $z^\delta$
 satisfies
 \beqn\label{eq:delta:para:l2}
\int_I\int_{\Omega}  | u(\qd)- z^\delta|^2 d\x dt \leq \delta^2
\eqn
and $\qda$  is the minimizer of the following
output least-squares formulation with Tikhonov regularization:
\begin{align}
\min_{q\in K}J_{\delta,\beta}(q)=\min_{q\in K}\Big(\f{1}{2}{\int_I\int_{\Omega}  | u(q)- z^\delta|^2 d\x dt}
+ \f \beta  2 \|q- q^*\|^2_{0,\Om}\Big),
\label{min:para}
\end{align}
where $\beta>0$ is the regularization parameter, $q^*\in K$ is an a priori estimate of the true
parameter $\qd$. Our goal is to study the convergence rate of the regularized solution $\qda$.  To this end, we
follow the same procedure used in Subsection 4.1, and first establish the following H\"{o}lder type (conditional)
estimate of the parabolic inverse radiativity problem.

 \begin{lemma}\label{lemma:instab:para:l2}
Assume $|u(\qd)|\geq \bar c_0$ for some positive constant $\bar c_0$ in $I\times\Om$.
\begin{enumerate}

\item[\textup{(a)}] If the space dimension $d=2$, then, for any $\epsilon\in (0,1/2)$,
\begin{equation}\label{est:para2:l2}
\|q-\qd\|_{H^{-1-\epsilon}(\Omega)}\leq C  \|u(q)-u(\qd)\|_{L^2(I;L^2(\Omega))}^{\f{1}{2}}\quad \forall\, q\in K.
\end{equation}

\item[\textup{(b)}] If the space dimension $d=3$ and \eqref{ass:d3} is fulfilled,
then \eqref{est:para2:l2} still holds for any $\epsilon\in (0,1/2)$.

\end{enumerate}
\end{lemma}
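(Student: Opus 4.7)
The plan is to mirror the reduction used in Theorem \ref{lemma:stability:l2} (the elliptic $L^2$ case), by combining the $L^2(I;H_0^1)$-estimate from Lemma \ref{lemma:instab:para} with a time-integrated Gagliardo--Nirenberg interpolation and a uniform $H^2$-in-space bound on the difference $u(q)-u(q^\dag)$.

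First, under the respective dimensional hypotheses (plain for $d=2$, and with \eqref{ass:d3} for $d=3$), Lemma \ref{lemma:instab:para} provides the starting estimate
$$\|q-q^\dag\|_{H^{-1-\epsilon}(\Omega)} \leq C\,\|u(q)-u(q^\dag)\|_{L^2(I;H_0^1(\Omega))}\quad \forall q\in K.$$
Next, applying the Gagliardo--Nirenberg inequality \eqref{Sob:interp} pointwise in $t$ to $w(\cdot,t):=u(q)(\cdot,t)-u(q^\dag)(\cdot,t)$ and integrating in time with Cauchy--Schwarz gives
$$\|w\|_{L^2(I;H_0^1(\Omega))}^{2} \leq C\int_I \|w(\cdot,t)\|_{2,\Omega}\|w(\cdot,t)\|_{0,\Omega}\,dt \leq C\,\|w\|_{L^2(I;H^2(\Omega))}\,\|w\|_{L^2(I;L^2(\Omega))}.$$

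The remaining ingredient is a uniform $L^2(I;H^2(\Omega))$ bound on $w$. Observing that $w$ solves the parabolic system
$$\partial_t w - \nabla\cdot(a\nabla w) + q\, w = u(q^\dag)(q^\dag-q) \quad \text{in } \Omega\times(0,T],$$
with vanishing initial and boundary data, Lemma \ref{lem:wellt} yields
$$\|w\|_{L^2(0,T;H^2(\Omega))}\leq C\,\|u(q^\dag)(q^\dag-q)\|_{L^2(0,T;L^2(\Omega))}\leq 2C\bar q\,\|u(q^\dag)\|_{L^2(0,T;L^2(\Omega))}\leq C,$$
uniformly in $q\in K$, because $q,q^\dag\in K\subset L^\infty(\Omega)$ and $u(q^\dag)\in L^2(0,T;H^2(\Omega))$ by Lemma \ref{lem:wellt}. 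Since $I\subset (0,T]$, restricting to $I$ preserves this bound. Chaining the three displays produces
$$\|q-q^\dag\|_{H^{-1-\epsilon}(\Omega)} \leq C\,\|u(q)-u(q^\dag)\|_{L^2(I;L^2(\Omega))}^{1/2},$$
establishing \eqref{est:para2:l2} in both (a) and (b).

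The only subtle point is the uniform $H^2$-in-space bound, which rests on the admissible set $K$ being bounded in $L^\infty(\Omega)$ and on the data regularity assumptions made at the outset of Section \ref{sec:parabolic} (so that Lemma \ref{lem:wellt} applies with constants independent of $q\in K$). Everything else is a routine adaptation of the argument already carried out in the elliptic case: the dimension-dependent assumption \eqref{ass:d3} enters only through Lemma \ref{lemma:instab:para}, so no new three-dimensional difficulty appears in the $L^2$-data setting beyond what was already handled for the gradient data.
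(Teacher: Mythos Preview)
Your proposal is correct and follows essentially the same route as the paper: invoke the $L^2(I;H_0^1)$ stability from Lemma \ref{lemma:instab:para}, interpolate via \eqref{Sob:interp} integrated in time with Cauchy--Schwarz, and close with the uniform $L^2(I;H^2)$ bound on $w=u(q)-u(q^\dag)$ obtained from Lemma \ref{lem:wellt} applied to the homogeneous parabolic system for $w$. The paper's write-up is terser (it leaves the appeal to Lemma \ref{lemma:instab:para} implicit in the phrase ``we only need to show''), but the structure and ingredients are identical.
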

\begin{proof} From Cauchy's inequality and \eqref{Sob:interp} it follows that
\beqn
\|u\|_{L^2(I;H^1(\Omega))}\leq C\|u\|_{L^2(I;H^2(\Omega))}^{\f{1}{2}}\|u\|_{L^2(I;L^2(\Omega))}^{\f{1}{2}}\quad\forall\,u\in L^2(I;H^2(\Omega))
\eqn
Thus, we only need to show that
\beqn\label{para:l2:g}
\|u(q)-u(\qd)\|_{L^2(I;H^2(\Omega))}\leq C.
\eqn
  It is easy to see from \eqref{q1p} that $w=u(q)-u(\qd)$ satisfies
\begin{equation}
\left\{ \begin{array}{rlclc}
\p_t w-\nabla\cdot(a(\x)\nabla w)+q(\x)w&=&u(\qd)(\qd-q) &\m{in} &\Om\times (0,T], \\
 w(\x,0)&=&0 &\m{in}  &\Om,\\
 w(\x,t)&=&0 &\m{on}  &\p\Om\times (0,T],
\end{array}
\right. \label{eq:para:l2:0}
\end{equation}
Then by making use of Lemma \ref{lem:wellt}, we get
\beqnx
\|u(q)-u(\qd)\|_{L^2(I;H^2(\Omega))}\le C\|u(\qd)(\qd-q)\|_{L^2(0,T;L^2(\Omega))}
\leq 2C\overline{q}\|u(\qd)\|_{L^2(0,T;L^2(\Omega))},
\eqnx
which infers that \eqref{para:l2:g} is valid.

\end{proof}

Using Lemma \ref{lemma:instab:para:l2} and the arguments leading to Theorem \ref{the:vsc:para:l2} and Theorem \ref{the:para:con:l2}, we can prove the analogues of Theorem \ref{the:vsc:para:l2} and Theorem \ref{the:para:con:l2}:
\begin{theorem}\label{the:vsc:para:l2}
  Assume $|u(\qd)|\geq \bar c_0$ and  $q^\dag-q^*\in H_0^\kappa(\Omega)$ with $\kappa>0$ and $\kappa\neq 1/2$,  and \eqref{ass:d3} holds when the space dimension $d=3$.
Then the following VSC: for any $q\in K$,
	\beqn\label{VSC:para}
	\f{1}{4}\|q-q^\dag\|^2_{0,\Omega}
	\le \f{1}{2}\|q-q^*\|_{0,\Omega}^2-\f{1}{2}
	\|q^\dag-q^*\|_{0,\Omega}^2+C\|u(q)-u(q^\dag)\|_{L^2(I;L^2(\Omega))}^\alpha,
	\eqn
 holds with some $C>0$ and parameter $\alpha>0$ such that
	$$
	\begin{cases}
	\alpha=1/2 \quad &\text{if}\,\, \kappa>1\\
	\alpha<\f{\kappa}{1+2\kappa}\,\textup{but it can be choosen arbitrarily close to}\,\,\f{2\kappa}{1+\kappa}  \quad &\text{if}\,\, \kappa\in(0,1].\\
	\end{cases}
	$$

\end{theorem}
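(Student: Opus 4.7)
The plan is to mimic closely the proof of Theorem \ref{the:vsc:para}, replacing the Lipschitz stability of Lemma \ref{lemma:instab:para} by the H\"older stability of Lemma \ref{lemma:instab:para:l2}, and, with this substitution, to replay the spectral balancing argument from the proof of Theorem \ref{the:vsc:l2}. First I would convert \eqref{VSC:para} to its equivalent inner product form via the parallelogram law and dispose of the trivial case $\qd=q^*$.

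For $\kappa>1$ the argument is immediate: I use the duality estimate $(\qd-q^*,\qd-q)_{\Omega}\leq \|\qd-q^*\|_{H_0^\kappa(\Omega)}\|\qd-q\|_{H^{-\kappa}(\Omega)}$ and apply Lemma \ref{lemma:instab:para:l2} directly, which forces the exponent $\alpha=1/2$ announced in the statement.

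For $\kappa\in(0,1/2)\cup(1/2,1]$ I would introduce the spectral projection family $\{P_\lambda\}_{\lambda>0}$ associated with $\A=-\Delta$ on $H_0^1(\Omega)\cap H^2(\Omega)$, exactly as in the proof of Theorem \ref{the:vsc}, split $\qd-q^*=P_\lambda(\qd-q^*)+(I-P_\lambda)(\qd-q^*)$, and control the two pieces separately. The high-frequency piece is handled by Young's inequality together with the spectral tail bound
$$\|(I-P_\lambda)(\qd-q^*)\|^2_{0,\Omega}\leq \|\qd-q^*\|^2_{D(\A^{\kappa/2})}/\lambda^{\kappa},$$
where the identification $H_0^\kappa(\Omega)=D(\A^{\kappa/2})$ coming from \eqref{eq:normeq0} accounts for the exclusion $\kappa\neq 1/2$. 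The low-frequency piece is handled via $H^{-s}(\Omega)$-duality for arbitrary $s>1$, using the Bernstein-type inequality $\|P_\lambda(\qd-q^*)\|_{H_0^s(\Omega)}\leq C\lambda^{(s-\kappa)/2}\|\qd-q^*\|_{D(\A^{\kappa/2})}$ together with Lemma \ref{lemma:instab:para:l2}.

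Setting $A=\|\qd-q^*\|_{D(\A^{\kappa/2})}$ and optimising $\lambda>0$ through the balance
$$\frac{A}{\lambda^{\kappa}}=\lambda^{(s-\kappa)/2}\|u(\qd)-u(q)\|_{L^2(I;L^2(\Omega))}^{1/2}$$
would collapse the right-hand side to a term proportional to $\|u(\qd)-u(q)\|_{L^2(I;L^2(\Omega))}^{\kappa/(s+\kappa)}$; letting $s\to 1^+$ then delivers the claimed range of $\alpha$. The part I expect to require most care is the precise bookkeeping of the optimisation step so that the resulting exponent matches the one stated in the theorem, together with the verification of the Bernstein-type low-frequency estimate; aside from that, the remainder of the argument is a direct parallel of Theorem \ref{the:vsc:para} with the Lipschitz stability of Lemma \ref{lemma:instab:para} replaced by the H\"older stability of Lemma \ref{lemma:instab:para:l2}, which is why the exponent essentially halves compared with the gradient-data case.
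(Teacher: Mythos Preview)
Your proposal is correct and follows exactly the approach the paper intends: the paper gives no explicit proof for this theorem, merely stating that it follows from Lemma \ref{lemma:instab:para:l2} together with the arguments of the earlier VSC theorems, and your sketch---reducing to the inner-product form, handling $\kappa>1$ by direct duality, and for $\kappa\in(0,1/2)\cup(1/2,1]$ splitting via the spectral projections $P_\lambda$ with the H\"older stability replacing the Lipschitz one---is precisely that. Your optimisation yields $\alpha<\kappa/(s+\kappa)\to\kappa/(1+\kappa)$ as $s\to1^+$, which is the exponent the argument actually produces (matching Theorem \ref{the:vsc:l2}); the two different values printed in the statement appear to be typographical inconsistencies in the paper rather than something your bookkeeping must reconcile.
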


\begin{theorem}\label{the:para:con:l2}
  Assume $|u(\qd)|\geq \bar c_0$ and  $q^\dag-q^*\in H_0^\kappa(\Omega)$ with $\kappa>0$ and $\kappa\neq 1/2$,   and \eqref{ass:d3} holds when the space dimension $d=3$.
Let  $\alpha$ be the
parameter chosen as in Theorem \ref{the:vsc:para}. Then we have the following convergence rates results,
\beqn\label{eq:convergence:para0}
\| u(\qda)-u(q^\dag)\|_{L^2(I;L^2(\Om))}=O(\delta) \quad (\delta\to 0)
\eqn
and
\beqn\label{eq:convergence:para1}
\|\qda-\qd\|_{0,\Omega}=O(\delta^{\f{\alpha}{2}}) \quad (\delta\to 0).
\eqn
under the parameter choice  $\beta=\delta^{2-\alpha}$.
\end{theorem}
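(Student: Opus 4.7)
The plan is to mimic the proof of Theorem \ref{the:para:con} line by line, replacing the gradient seminorm $\|\nabla(u(\qda)-u(\qd))\|_{L^2(I;L^2(\Om))}$ by the stronger-in-information but notationally simpler quantity $\|u(\qda)-u(\qd)\|_{L^2(I;L^2(\Om))}$. Since the data fidelity term is already in $L^2(I;L^2(\Om))$ and the VSC from Theorem \ref{the:vsc:para:l2} uses exactly this norm, I expect no Poincar\'e inequality will be needed here.

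First I would exploit the minimizing property of $\qda$ in \eqref{min:para} together with the noise bound \eqref{eq:delta:para:l2} to obtain
\begin{equation*}
\tfrac12\int_I\|u(\qda)-z^\delta\|_{0,\Om}^2\,dt+\tfrac\beta2\|\qda-q^*\|^2_{0,\Om}\le\tfrac12\delta^2+\tfrac\beta2\|\qd-q^*\|^2_{0,\Om},
\end{equation*}
from which
\begin{equation*}
\tfrac12\|\qda-q^*\|^2_{0,\Om}-\tfrac12\|\qd-q^*\|^2_{0,\Om}\le\tfrac{\delta^2}{2\beta}-\tfrac{1}{2\beta}\int_I\|u(\qda)-z^\delta\|_{0,\Om}^2\,dt.
\end{equation*}
Combining this with VSC \eqref{VSC:para} of Theorem \ref{the:vsc:para:l2} and the elementary inequality $\|u(\qda)-u(\qd)\|^2_{L^2(I;L^2(\Om))}\le 2\|u(\qda)-z^\delta\|^2_{L^2(I;L^2(\Om))}+2\delta^2$, I get
\begin{equation*}
0\le\tfrac{1}{2\beta}\bigl(2\delta^2-\tfrac12\|u(\qda)-u(\qd)\|^2_{L^2(I;L^2(\Om))}\bigr)+C\|u(\qda)-u(\qd)\|_{L^2(I;L^2(\Om))}^\alpha,
\end{equation*}
which rearranges to
\begin{equation*}
\|u(\qda)-u(\qd)\|^2_{L^2(I;L^2(\Om))}\le 4\delta^2+C\beta\|u(\qda)-u(\qd)\|^\alpha_{L^2(I;L^2(\Om))}.
\end{equation*}

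Next I would do the same dichotomy as in Theorem \ref{the:para:con}: either $\|u(\qda)-u(\qd)\|_{L^2(I;L^2(\Om))}<\delta$ and we are done, or it is $\ge\delta$, in which case the bound $\alpha\le 1$ yields $\|u(\qda)-u(\qd)\|^\alpha_{L^2(I;L^2(\Om))}\le\|u(\qda)-u(\qd)\|_{L^2(I;L^2(\Om))}\delta^{\alpha-1}$; inserting this and choosing $\beta=\delta^{2-\alpha}$ collapses the previous inequality to $\|u(\qda)-u(\qd)\|^2_{L^2(I;L^2(\Om))}\le C\delta\|u(\qda)-u(\qd)\|_{L^2(I;L^2(\Om))}$, which establishes the first rate \eqref{eq:convergence:para0}.

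Finally, feeding this first rate back into VSC \eqref{VSC:para} together with the inequality on $\tfrac12\|\qda-q^*\|^2_{0,\Om}-\tfrac12\|\qd-q^*\|^2_{0,\Om}$ gives
\begin{equation*}
\tfrac14\|\qda-\qd\|^2_{0,\Om}\le\tfrac{\delta^2}{2\beta}+C\|u(\qda)-u(\qd)\|_{L^2(I;L^2(\Om))}^\alpha\le\tfrac{\delta^\alpha}{2}+C\delta^\alpha,
\end{equation*}
which is precisely \eqref{eq:convergence:para1}. There is no serious obstacle in the argument; everything is a routine repetition of Theorem \ref{the:para:con} with the $L^2$-norm replacing the gradient $L^2$-norm, and the only place where Theorem \ref{the:vsc:para:l2} must be invoked (rather than Theorem \ref{the:vsc:para}) is at the step where the VSC is applied. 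The one subtle point worth double-checking is that the constant in front of $\delta^2$ in the triangle-inequality step remains small enough that the coefficient of $\|u(\qda)-u(\qd)\|^2_{L^2(I;L^2(\Om))}$ on the left-hand side is strictly positive; this is exactly the role of the factor $1/2$ inside the parenthesis and follows as in Theorem \ref{the:para:con}.
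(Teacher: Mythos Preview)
Your proposal is correct and follows exactly the route the paper intends: the paper does not spell out a proof for this theorem but states just before Theorems \ref{the:vsc:para:l2} and \ref{the:para:con:l2} that they follow by the same arguments as Theorems \ref{the:vsc:para} and \ref{the:para:con}, which is precisely the substitution you carry out. Your observation that no Poincar\'e inequality is needed here (since the data fidelity and the VSC are already in the $L^2(I;L^2(\Om))$ norm) is correct, and the reference to Theorem \ref{the:vsc:para} in the theorem statement is evidently a typo for Theorem \ref{the:vsc:para:l2}, as you implicitly recognize.
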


\section{Concluding remarks }\label{sec:conclu}
We have justified in this work the conditional stability estimates of the elliptic and parabolic
inverse radiativity  problems. We have also proposed some new variational source conditions,
which are rigorously verified under the conditional stabilities in general dimensional spaces.
With these variational source conditions,
the reasonable convergence rates results are achieved. 

In the future work, we shall consider some elliptic and parabolic
inverse radiativity and conductivity  problems with measurable data in some subdomain of $\Om$.
We hope to propose some variational source conditions, which can be verified rigorously, and
derive some corresponding convergence rates results.

\end{document}